\newtheorem{Theorem}{Theorem}[section]
\newtheorem{theorem}[Theorem]{Theorem}
\newtheorem{definition}[Theorem]{Definition}
\newtheorem{remark}[Theorem]{Remark}
\newtheorem{example}[Theorem]{Example}
\newtheorem{lemma}[Theorem]{Lemma}
\newtheorem{proposition}[Theorem]{Proposition}
\title{Cohomologies and linear deformations
	of pre-Jacobi-Jordan algebras
}
\author[S. Attan and N. Oro Djibril]
{ Sylvain Attan and Nabil Oro Djibril}
\address{Sylvain Attan \newline
	D\'{e}partement de Math\'{e}matiques, Universit\'{e} d'Abomey-Calavi
	01 BP 4521, Cotonou 01, B\'{e}nin}
\email{syltane2010@yahoo.fr}
\address{Nabil Oro Djibril \newline
	Institut de Math\'ematiques et de Sciences Physiques, Universit\'{e} d'Abomey-Calavi
	01 BP 613-Oganla, Porto-Novo, B\'{e}nin}
\email{nabil.orodjibril@imsp-uac.org}
\begin{document}
	\maketitle
	\begin{abstract}
		The purpose of this paper is to introduce an algebraic cohomology  theory of left pre-Jacobi-Jordan algebras. We use the cohomological approach to study linear deformations of these algebras. We also introduce the notion of Nijenhuis operators on pre-Jacobi-Jordan algebras and we show that the deformation generated by a Nijenhuis operator is trivial. 
	\end{abstract}
	{\bf 2020 Mathematics Subject Classification:} 17C50,16W10,17B56. 
	
	{\bf Keywords:}	Pre-Jacobi-Jordan algebras, cohomologies, linear deformations.
	\section{Introduction}
	
	Left (resp. right) pre-Jacobi Jordan algebras are algebras whose anti-associator is left (resp. right) skew-symmetric. This justifies the name " left(resp. right) skew-symmetric algebras" given for these algebras, first studied in \cite{absb}. There is a close relationship between pre-Jacobi-Jordan algebras and Jacobi-Jordan algebras: a pre-Jacobi-Jordan algebra $(A, \cdot)$ gives rise to a Jacobi-Jordan algebra $(A, \ast )$ via the anticommutator
	multiplication, i.e., pre-Jacobi-Jordan algebras are Jacobi-Jordan-admissible algebras. Moreover, it is observed that they form a particular class of Jacobi-Jordan-admissible algebras in \cite{absbmm}, where various constructions of these algebras are discussed, in particular an inductive description is provided.  If A is left
	pre-Jacobi-Jordan algebra, then the algebra defined on the same vector space $A$ with "opposite” multiplication $x\bullet y:=
	y\cdot x$ is a right pre-Jacobi-Jordan algebra and vice-versa. Hence, all the statements for left pre-Jacobi-Jordan algebras have their
	corresponding statements for right pre-Jacobi-Jordan algebras. Thus, we will only consider the left pre-Jacobi-Jordan algebra case that we call pre-Jacobi-Jordan algebra for short.
	
	The deformation theory was first started mainly in geometry and physics. More precisely, in quantum mechanics, the non-commutative associative product of operators  is viewed  as a formal associative deformation of the pointwise product of the algebraic structure on symbols of these operators. Therefore, in several works \cite{mger1,mger2,mger3,mger4}, Murray Gerstenhaber introduced algebraic formal deformations of associative algebras and proved that  this theory is intimately linked to the cohomology of  considered algebras. For further information on the extension of such  approach for other algebraic structures, one may refer to, e.g., \cite{haam1,haam2, meam} and their references.
	
	Even though there are important applications of pre-Jacobi-Jordan algebras, cohomologies of these algebras have not been developed due perhaps to the dfficulty.
	The purpose of this paper is to fill this gap. Precisely, we introduce the cohomology theory of pre-Jacobi-Jordan algebras and give its applications. As Jacobi-Jordan algebtras case \cite{abak}, the complex provided here is defined by two sequences of operators
	$d^i$ and $\delta^i$ which satisfy $d^{p+1}\circ\delta^p=0$ for any integer $p\geq 0$. It is called a zigzag cohomology since it deals with  two types of cochains and two sequences of operators. Next, we study  linear deformations of Jacobi-Jordan algebras and prove that these deformations are in one-to-one correspondence with the elements of the second zigzag cohomology group. This shows that the zigzag cohomology fits perfectly and provides the expected results.

	The outline of the paper is as follows: In the second section, some basic notions
	and concepts used through the paper, namely some properties of pre-Jacobi-Jordan algebras and
	their representations are given.
	In section 3,  we introduce a cohomology theory of pre-Jacobi-Jordan algebras. Next, we compute the first and second cohomology groups and give interpretations of these groups.
 In Section 4, we study linear deformation theory of pre-Jacobi-Jordan algebras and show that the cohomology theory introduced in Section 3 fits. In section 5, we introduce the notion of a Nijenhuis operator on a pre-Jacobi-Jordan algebra and show that it generates a trivial deformation of this pre-Jacobi-Jordan algebra.
 
	Throughout this paper, all vector spaces and algebras are meant over a commutative ground field $\mathbb{K}$.
	\section{Basic results on pre-Jaobi-Jordan algebras}
	In this section, we recall some facts about (pre)-Jacobi-Jordan algebras \cite{abak},\cite{absb},\cite{dbaf}, \cite{pzu}
	and we provide some results about their representations.
	\begin{definition}\cite{dbaf}\label{D2}
		A Jacobi-Jordan algebra is an algebra $(A,\ast)$ such that
		\begin{eqnarray}
			\label{3}
			&& x\ast y=y\ast x \mbox{ ( commutativity), }\nonumber\\
			&& J(x,y,z):=\circlearrowleft_{(x,y,z)}(x\ast y)\ast z)=0 \mbox{ \ \ $\forall  x,y,z\in A $,} \label{JJi}
		\end{eqnarray}
		where $\circlearrowleft_{(x,y,z)}$ is the sum over cyclic permutation of $x,y,z.$ The map 
		$J$ is called the Jacobian  of $(A,\cdot) $.
	\end{definition}
In \cite{dbaf}, it is proved that an algebra  is a commutative algebra such that $x^3 =0,\ \forall x\in A$ if and only if it is a Jacobi-Jordan algebra. Moreover, it is found \cite{absb} that every Jacobi-Jordan algebra $(A,\cdot)$ is a nilpotent Jordan algebra such that $x^3 =0,\ \forall x\in A.$
	\begin{definition}\cite{abak}\cite{pzu}\label{D3}
		A representation of a Jacobi-Jordan algebra $ (A,\ast) $ is a couple $ (V,\rho ) $ where $ V $ is a vector space and $ \rho : A\to gl(V) $ is a linear map such that
		\begin{eqnarray}
			\label{28}
			\rho(x\ast y)=-\rho(x)\circ \rho(y)-\rho(y)\circ \rho(x) \mbox{ \ \ $\forall  x,y\in A $.}
		\end{eqnarray}
	\end{definition}
Let prove the following elementary result.
	\begin{proposition}\label{Pro1}
		Let $\mathcal{A}_1:=(A_1, \ast_1)$ and $\mathcal{A}_2:=(A_2, \ast_2)$ be two Jacobi-Jordan algebras and $f:\mathcal{A}_1\rightarrow \mathcal{A}_2$ be a morphism of Jacobi-Jordan algebras. Then $A_2^f:=(A_2, \rho)$ is a representation of $\mathcal{A}_1$ where
		$\rho(a)b:=f(a)\ast_2 b$ for all $(a,b)\in A_1\times A_2.$
	\end{proposition}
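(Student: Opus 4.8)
The plan is to verify directly the single defining axiom \eqref{28} of a representation, namely that $\rho(x\ast_1 y)=-\rho(x)\circ\rho(y)-\rho(y)\circ\rho(x)$ for all $x,y\in A_1$, by evaluating both sides on an arbitrary element $b\in A_2$ and then invoking the Jacobi-Jordan identity of $\mathcal{A}_2$. Note first that $\rho$ is well defined and linear in its argument, since $f$ is linear and $\ast_2$ is bilinear, and that each $\rho(a)$ lies in $gl(A_2)$; so the only thing to check is the compatibility condition.

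First I would expand the left-hand side applied to $b$: because $f$ is a morphism of Jacobi-Jordan algebras, $\rho(x\ast_1 y)b=f(x\ast_1 y)\ast_2 b=(f(x)\ast_2 f(y))\ast_2 b$. Next I would expand the right-hand side: by definition $(\rho(x)\circ\rho(y))b=\rho(x)\big(f(y)\ast_2 b\big)=f(x)\ast_2(f(y)\ast_2 b)$, and symmetrically $(\rho(y)\circ\rho(x))b=f(y)\ast_2(f(x)\ast_2 b)$. Hence the desired identity is equivalent to
\[
(f(x)\ast_2 f(y))\ast_2 b+f(x)\ast_2(f(y)\ast_2 b)+f(y)\ast_2(f(x)\ast_2 b)=0 \quad\text{in }A_2.
\]

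Finally I would identify the left-hand side of this last equation with the Jacobian $J(f(x),f(y),b)$ of $\mathcal{A}_2$. Indeed, using commutativity of $\ast_2$ one has $(f(y)\ast_2 b)\ast_2 f(x)=f(x)\ast_2(f(y)\ast_2 b)$ and $(b\ast_2 f(x))\ast_2 f(y)=f(y)\ast_2(f(x)\ast_2 b)$, so the three summands above are precisely the three cyclic terms of $J(f(x),f(y),b)$; by \eqref{JJi} this Jacobian vanishes, which completes the verification and shows $(A_2,\rho)$ is a representation of $\mathcal{A}_1$.

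The computation is entirely routine; the only point requiring any care is the repeated use of commutativity of $\ast_2$ to match the cyclic terms of $J(f(x),f(y),b)$ with the nested products appearing in $\rho(x)\circ\rho(y)$ and $\rho(y)\circ\rho(x)$. I do not expect any genuine obstacle here.
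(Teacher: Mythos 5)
Your proposal is correct and follows essentially the same route as the paper: expand $\rho(x\ast_1 y)b+\rho(x)\rho(y)b+\rho(y)\rho(x)b$ using the morphism property, and recognize the result, after applying commutativity of $\ast_2$, as the Jacobian $J(f(x),f(y),b)$, which vanishes by the Jacobi-Jordan identity. The paper's proof is exactly this computation, written more tersely.
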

	\begin{proof}For all $ x,y\in A_1 $ and $ b\in A_2 $, we have :\\
		$ \begin{array}{lll}
			\rho(x\ast_1 y)b+\rho(x)\circ\rho(y)b+\rho(y)\circ\rho(x)b\stackrel{(\ref{3})}{=}\biggl(f(x)\ast_2f(y)\biggr)\ast_2b+\biggl(f(y)\ast_2b\biggr)\ast_2f(x)\\
			+\biggl(b\ast_2f(x)\biggr)\ast_2f(y)\stackrel{(\ref{3})}{=}0.
		\end{array} $\\
		Hence, $\rho(x\ast_1 y)=-\rho(x)\rho(y)-\rho(y)\rho(x).$
	\end{proof}
	\begin{example}
		\begin{enumerate}
			\item Let $(A,\ast)$ be a Jacobi-Jordan algebra. Define a left multiplication 
			$ L: A\rightarrow gl(A)$  by $L(x)y:=x\ast y$  for all $x, y \in A$. Then $(A , L)$ is a representation of $(A, \ast),$  called a regular representation.
			\item Let $(A,\ast)$ be a Jacobi-Jordan algebra and $I$ be an ideal of $(A,\ast).$ Then $I$  inherits a structure of representation of $(A,\ast)$ where 
			$\rho(a)b:=a\ast b$ for all $(a,b)\in A\times I.$
		\end{enumerate}
	\end{example} 
	\begin{definition}\label{D1}Let $ (A,\cdot) $ be an algebra.
	The anti-associator of $(A,\cdot) $ is the trilinear map  $ Aasso:A\times A\times A\to A $ defined by :
	\begin{eqnarray}
		\label{1}
		Aasso(x,y,z):= (x\cdot
		y)\cdot z+x\cdot (y\cdot z) \mbox{ \ \ $\forall  x,y,z\in A $.}
	\end{eqnarray}
	$ (A,\cdot) $ is called an anti-associative algebra if 
	\begin{eqnarray}
		\label{2}
		Aasso(x,y,z)=0,& \forall x,y,z\in A. 
	\end{eqnarray}
\end{definition}
Now, let's give the definition of algebras which are fundamental in this work. They are first introduced and called left(right) skew-symmetric algebras in \cite{absb}. They generalize anti-associative algebras as you will see in the following definition.
	\begin{definition}
		\label{D4}
		An algebra $ (A,\cdot) $ is called 
		\begin{enumerate}
			\item a left pre-Jacobi-Jordan algebra if
			\begin{eqnarray}
				\label{6}
				Aasso(x,y,z)+Aasso(y,x,z)=0,& \forall x,y,z\in A,
			\end{eqnarray}
			or equivalently
			\begin{eqnarray}
				(x\cdot y)\cdot z+x\cdot(y\cdot z)+(y\cdot x)\cdot z+y\cdot(x\cdot z)=0
				\mbox{ for all $x,y,z\in A.$} \label{6l}
			\end{eqnarray}		 
			\item a right pre-Jacobi-Jordan algebras if
			\begin{eqnarray}
				\label{19}
				Aasso(x,y,z)+Aasso(x,z,y)=0,& \forall x,y,z\in A,
			\end{eqnarray}
			or equivalently
			\begin{eqnarray}
				(x\cdot y)\cdot z+x\cdot(y\cdot z)+(x\cdot z)\cdot y+x\cdot(z\cdot y)=0
				\mbox{ for all $x,y,z\in A.$}\label{19r}
			\end{eqnarray}
		\end{enumerate}
	\end{definition}
	\begin{proposition}\label{Pro3}
		Let $ (A,\cdot) $ be an algebra and $ \bullet $ be the bilinear map defined by\\
		$$ a\bullet b=b\cdot a  \mbox{ $\forall a,b\in A.$}$$	
		Then, $ (A,\cdot) $ is a left pre-Jacobi-Jordan algebra if and only if $ (A,\bullet) $ is a right pre-Jacobi-Jordan algebra.
	\end{proposition}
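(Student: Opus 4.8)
The plan is to reduce everything to a single identity relating the anti-associator of $\bullet$ to that of $\cdot$, and then read off the equivalence by relabeling variables. Write $Aasso$ for the anti-associator of $(A,\cdot)$ as in \eqref{1}, and let $Aasso_\bullet$ denote the analogous trilinear map for $(A,\bullet)$.

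First I would unfold $Aasso_\bullet(x,y,z)$ using only the definition $a\bullet b=b\cdot a$. We have $(x\bullet y)\bullet z = z\cdot(x\bullet y)=z\cdot(y\cdot x)$ and $x\bullet(y\bullet z)=(y\bullet z)\cdot x=(z\cdot y)\cdot x$, so that
\begin{eqnarray}
Aasso_\bullet(x,y,z)=z\cdot(y\cdot x)+(z\cdot y)\cdot x=Aasso(z,y,x)\quad\forall x,y,z\in A. \nonumber
\end{eqnarray}
This single observation is the whole content of the proof; everything afterwards is bookkeeping.

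Next I would substitute this into the defining condition \eqref{19} for right pre-Jacobi-Jordan algebras applied to $(A,\bullet)$, namely $Aasso_\bullet(x,y,z)+Aasso_\bullet(x,z,y)=0$. By the identity above this becomes $Aasso(z,y,x)+Aasso(y,z,x)=0$ for all $x,y,z\in A$. Setting $u=z$, $v=y$, $w=x$ and using that $x,y,z$ range over all of $A$, this says exactly $Aasso(u,v,w)+Aasso(v,u,w)=0$ for all $u,v,w\in A$, which is precisely condition \eqref{6} defining left pre-Jacobi-Jordan algebras for $(A,\cdot)$. Since each step is an equivalence of identities quantified over all elements of $A$, both implications follow simultaneously, proving the proposition.

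I do not expect any real obstacle here; the only thing to be careful about is the order of the arguments when passing between $\cdot$ and $\bullet$, so I would keep the variable substitutions explicit. One could also remark that this is the natural counterpart, at the level of the quadratic identities, of the passage between ``left'' and ``right'' skew-symmetry already mentioned in the introduction, and that the same computation shows $(A,\bullet)$ is a left pre-Jacobi-Jordan algebra whenever $(A,\cdot)$ is a right one, recovering the symmetry used to restrict attention to the left case.
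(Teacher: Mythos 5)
Your proof is correct and follows essentially the same route as the paper: both reduce the statement to the observation that the four terms of the left condition \eqref{6l} for $\cdot$ coincide, after reversing the arguments, with the four terms of the right condition \eqref{19r} for $\bullet$. Your formulation via the single identity $Aasso_\bullet(x,y,z)=Aasso(z,y,x)$ is just a slightly more compact packaging of the paper's term-by-term computation.
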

	\begin{proof}
		For all $ x, y, z\in A $ we have :
		\begin{eqnarray}
			&&	(x\cdot y)\cdot z+x\cdot( y\cdot z)+(y\cdot x)\cdot z+y\cdot (x\cdot z)\nonumber\\
			&&=z\bullet(y\bullet x)+(z\bullet y)\bullet x+z\bullet(x\bullet y)
			+(z\bullet x)\bullet y.\nonumber
		\end{eqnarray}
		Hence, $$(x\cdot y)\cdot z+x\cdot( y\cdot z)+(y\cdot x)\cdot z+y\cdot (x\cdot z)=0 $$ if and only if
		$$ z\bullet(y\bullet x)+(z\bullet y)\bullet x+z\bullet(x\bullet y)
		+(z\bullet x)\bullet y=0.$$	
	\end{proof}
	\begin{remark}
		The algebra $ (A,\bullet)$  denoted  by $ A^{op}$ is called the opposite algebra of the algebra $ (A,\cdot) $.	
	\end{remark}
Thanks to Proposition \ref{Pro3}, in the sequel, by pre-Jacobi-Jordan algebras, we mean left pre-Jacobi-Jordan algebras. \\

Now, let give some examples of pre-Jacobi-Jordan algebras which will be used in next section.
\begin{example}
	Consider the 2-dimensional algebra $ \mathcal{A}_1=(A,\cdot) $ with a basis $ \{e_{1},e_{2}\} $ where nonzero products are $$e_{1}\cdot e_{1}=e_{2}.  $$
	Then $ \mathcal{A}_1$ is a left pre-Jacobi-Jordan algebra. 	
\end{example}
Let consider also another example.
\begin{example}
	Let $ \mathcal{A}_2=(\mathbb{K}^4,\cdot) $ be the left pre-Jacobi-Jordan algebra defined and denoted by $(\mathcal{A}_4,\cdot_D)$ in $[$\cite{absbmm},
	Example 4.1 $]$. It is defined with respect to a basis $\{e_1, e_2, e_3, e_4 \}$ by 
	$$e_1\cdot e_1:=\frac{1}{2}e_2,\ e_1\cdot e_3:=\frac{5}{9}e_4,\ e_3\cdot e_1:=\frac{4}{9}e_4.$$
\end{example}
In the sequel we refer to these examples of left pre-Jacobi-Jordan algebras and their underlying vectors spaces by $\mathcal{A}_1$ and $\mathcal{A}_2$ respectively. These examples will be used in the sequel to illustrate the low degree cohomology of pre-Jacobi-Jordan algebras
	\begin{proposition}
		\label{Pro2}
		Let $ (A,\cdot) $ be a left pre-Jacobi-Jordan algebra. Then the product given by
		\begin{eqnarray}
			\label{27}
			x\ast y=x\cdot y+y\cdot x,  \mbox{ $\forall x,y\in A$, }\nonumber
		\end{eqnarray}
		defines a Jacobi-Jordan algebra structure on $ A $ called the associated (or sub-adjacent) Jacobi-Jordan algebra of $ (A,\cdot) $ and is denoted by $ A^{C}.$ The algebra $ (A,\cdot) $ is called a compatible left pre-Jacobi-Jordan algebra structure on the Jacobi-Jordan algebra $ A^{C}=(A,\ast) $.
	\end{proposition}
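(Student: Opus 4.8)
The plan is to check the two conditions in Definition \ref{D2} for the product $\ast$ given by $x\ast y=x\cdot y+y\cdot x$, that is, commutativity and the vanishing of the Jacobian $J$. Commutativity is immediate, since $x\cdot y+y\cdot x$ is manifestly symmetric in its two arguments, so $x\ast y=y\ast x$ for all $x,y\in A$. Thus the only substantial point is the identity $J(x,y,z)=\circlearrowleft_{(x,y,z)}(x\ast y)\ast z=0$.

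I would first expand $(x\ast y)\ast z$ by applying the definition of $\ast$ twice: $(x\ast y)\ast z=(x\ast y)\cdot z+z\cdot(x\ast y)=(x\cdot y)\cdot z+(y\cdot x)\cdot z+z\cdot(x\cdot y)+z\cdot(y\cdot x)$. Writing out the analogous expansions of $(y\ast z)\ast x$ and $(z\ast x)\ast y$ then exhibits $J(x,y,z)$ as a sum of twelve terms, each of the form (product of two elements) followed by a product with the third, with no two of the twelve equal.

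The key observation is that these twelve terms are exactly the left-hand sides of the three instances of the left pre-Jacobi-Jordan identity (\ref{6l}) obtained by cyclically permuting $(x,y,z)$: adding the identities (\ref{6l}) applied to $(x,y,z)$, to $(y,z,x)$, and to $(z,x,y)$ produces precisely $J(x,y,z)$, and each such identity is zero by hypothesis, so $J(x,y,z)=0$. The only real work is the bookkeeping to confirm that each of the twelve monomials occurs once and exactly once among the three instances; there is no cancellation to keep track of, so once the terms are matched up the conclusion follows at once. The final sentence of the statement, that $(A,\cdot)$ is then a compatible left pre-Jacobi-Jordan structure on $A^C=(A,\ast)$, is simply the definition of that terminology and needs no further argument.
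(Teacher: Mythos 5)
Your proposal is correct and follows essentially the same route as the paper: the paper's proof likewise rewrites $J(x,y,z)=\circlearrowleft_{(x,y,z)}\bigl((x\cdot y)\cdot z+x\cdot(y\cdot z)+(y\cdot x)\cdot z+y\cdot(x\cdot z)\bigr)$, i.e., as the cyclic sum of the left-hand sides of (\ref{6l}), each of which vanishes. You merely make the twelve-term bookkeeping explicit, which the paper leaves implicit.
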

	\begin{proof}
		Let $ x,y,z\in A $, then using (\ref{6l}), we have
		\begin{eqnarray}
			&&	J(x,y,z)=\circlearrowleft_{(x,y,z)}(x\ast y)\ast z\nonumber\\
			&&	=\circlearrowleft_{(x,y,z)}\biggl(x\cdot y)\cdot z+x\cdot(y\cdot z)+(y\cdot x)\cdot z+y\cdot(x\cdot z)=0.\nonumber
		\end{eqnarray} 
	\end{proof}
	\begin{proposition}\label{Pro4}
		Let $ (A,\cdot) $ be a left pre-Jacobi-Jordan algebra and $ (B,\diamond) $ be an associative and cummutative algebra. Define
		\begin{eqnarray}
			\label{20}
			(x\otimes a)\bot (y\otimes b)=(x\cdot y)\otimes(a\diamond b ) 
			\mbox{\,\ $\forall (x\otimes a),(y\otimes b)\in A\otimes B$.}	
		\end{eqnarray}
		Then $ (A\otimes B,\bot) $ is a left pre-Jacobi-Jordan algebra.
	\end{proposition}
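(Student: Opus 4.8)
The plan is to verify the defining identity (\ref{6l}) for the product $\bot$ directly on simple tensors and then pass to arbitrary elements by multilinearity. So fix $X=x\otimes a$, $Y=y\otimes b$, $Z=z\otimes c$ with $x,y,z\in A$ and $a,b,c\in B$. Applying the definition (\ref{20}) twice, I would expand each of the four terms $(X\bot Y)\bot Z$, $X\bot(Y\bot Z)$, $(Y\bot X)\bot Z$, $Y\bot(X\bot Z)$ into the shape $(\text{triple }\cdot\text{-product})\otimes(\text{triple }\diamond\text{-product})$; for instance $(X\bot Y)\bot Z=\big((x\cdot y)\cdot z\big)\otimes\big((a\diamond b)\diamond c\big)$ and $X\bot(Y\bot Z)=\big(x\cdot(y\cdot z)\big)\otimes\big(a\diamond(b\diamond c)\big)$, with the remaining two obtained by swapping the roles of $x,a$ and $y,b$.

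The key observation is that, since $(B,\diamond)$ is associative and commutative, the four $B$-factors $(a\diamond b)\diamond c$, $a\diamond(b\diamond c)$, $(b\diamond a)\diamond c$, $b\diamond(a\diamond c)$ all coincide with a single element, which I will write $a\diamond b\diamond c$. Factoring this common element out of the tensor, the sum of the four terms becomes
$$\Big((x\cdot y)\cdot z+x\cdot(y\cdot z)+(y\cdot x)\cdot z+y\cdot(x\cdot z)\Big)\otimes(a\diamond b\diamond c),$$
and the first tensor factor vanishes by identity (\ref{6l}) applied to the left pre-Jacobi-Jordan algebra $(A,\cdot)$. Hence $(X\bot Y)\bot Z+X\bot(Y\bot Z)+(Y\bot X)\bot Z+Y\bot(X\bot Z)=0$ whenever $X,Y,Z$ are simple tensors.

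Finally, since the simple tensors span $A\otimes B$ and the map $(X,Y,Z)\mapsto (X\bot Y)\bot Z+X\bot(Y\bot Z)+(Y\bot X)\bot Z+Y\bot(X\bot Z)$ is trilinear, its vanishing on a spanning set forces it to vanish identically; thus $(A\otimes B,\bot)$ satisfies (\ref{6l}) and is a left pre-Jacobi-Jordan algebra. I do not expect a genuine obstacle here: the only points that require a little care are that \emph{both} the associativity and the commutativity of $\diamond$ are used to collapse the four $B$-factors into one, and that the reduction from general elements to simple tensors is legitimate precisely because (\ref{6l}) is the vanishing of a multilinear expression.
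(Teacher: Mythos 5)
Your proposal is correct and follows essentially the same route as the paper's proof: expand the four terms on simple tensors, use the associativity and commutativity of $\diamond$ to identify the four $B$-factors, factor out the common tensor factor, and apply (\ref{6l}) in $(A,\cdot)$. Your explicit remark that the reduction to simple tensors is justified by the trilinearity of the defect expression is a small point the paper leaves implicit, but it does not change the argument.
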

	\begin{proof}
		Let $ (x\otimes a),(y\otimes b),(z\otimes c)\in A\otimes B$, then
		\begin{eqnarray}
			&& Aasso_{\bot}(x\otimes a,y\otimes b,z\otimes c)+Aasso_{\bot}(y\otimes b,x\otimes a,z\otimes c)\nonumber\\
			&&=((x\otimes a)\bot (y\otimes b))\bot(z\otimes c)+(x\otimes a)\bot ((y\otimes b)\bot(z\otimes c))+((y\otimes b)\bot (x\otimes a))\bot(z\otimes c)\nonumber\\
			&&+(y\otimes b)\bot ((x\otimes a)\bot(z\otimes c))=((x\cdot y)\cdot z)\otimes((a\diamond b)\diamond)c+(x\cdot(y\cdot z))\otimes(a \diamond(b\diamond c))\nonumber\\
			&&+((y\cdot x)\cdot z)\otimes(b\diamond(a\diamond c))
			+(y\cdot(x\cdot z))\otimes(b\diamond(a\diamond c))
			\nonumber\\
			&&=((x\cdot y)\cdot z+x\cdot(y\cdot z)+(y\cdot x)\cdot z+y\cdot(x\cdot z))\otimes(a\diamond(b\diamond c))=0\nonumber\\
			&&\mbox{( by commutativity and associativity of $\diamond$ and (\ref{6l})).}\nonumber
		\end{eqnarray} 
		Therefore, $ (A\otimes B,\bot) $ is a left pre-Jacobi-Jordan algebra.
	\end{proof}
	\begin{definition}\label{D5}
		Let $ \mathcal{A}=(A,\cdot_A) $ and $\mathcal{B}=(B,\cdot_B) $ be two left pre-Jacobi-Jordan algebras. A linear map $ f : A\to B$ is called left  pre-Jacobi-Jordan algebra morphism if
		\begin{eqnarray}
			\label{16}
			f(x\cdot_A y)=f(x)\cdot_B f(y),&\forall x,y\in A.
		\end{eqnarray}
	\end{definition}
	\begin{definition}\label{D6}
		Let $ (A,\cdot) $ be a left pre-Jacobi-Jordan algebra and $ \lambda\in \mathbb{K} $. Let $ P $ be a linear map satisfying
		\begin{eqnarray}
			\label{11}
			P(x)\cdot P(y)=P\bigg(P(x)\cdot y+x\cdot P(y) +\lambda P(x\cdot y)\bigg),& \forall x,y\in A.
		\end{eqnarray}
		Then, $ P $ is called a Rota-Baxter operator of weight $ \lambda $ on $ A $ and
		$ (A;\cdot, P) $ is called a  left pre-Jacobi-Jordan Rota-Baxter algebra of weight $ \lambda $.	
	\end{definition}
	\begin{definition}\label{D7}
		A representation of a left pre-Jacobi-Jordan algebra $ (A,\cdot) $ on a vector space $ V $
		consists of a pair $ (\rho,\mu) $, where $ \rho,\mu : A\to gl(V) $ are linear maps satisfying:	
		\begin{eqnarray}
			\label{7}
			\rho(x\ast y)+\rho(x)\circ \rho(y)+\rho(y)\circ \rho(x)=0, \mbox{\,\ $\forall x,y\in A $, }
		\end{eqnarray}
		\begin{eqnarray}
			\label{8}
			\mu(x\cdot y)+\mu(y)\circ \mu(x)+\mu(y)\circ\rho(x)+\rho(x)\circ\mu(y)=0, \mbox{\,\ $\forall x,y\in A $, }
		\end{eqnarray}
		where $ x\ast y:=x\cdot y+y\cdot x $.\\
		Observe that, condition (\ref{7}) means that $ (V,\rho) $ is a representation of the  Jacobi-Jordan algebra $A^C=(A,\ast) $.	
	\end{definition}
\begin{proposition}\label{Pro67}
	Let $ \mathcal{A}:=(A,\cdot)$ and $ \mathcal{B}:=(B,\intercal)$ be two left pre-Jacobi-Jordan algebras and $ f : \mathcal{A} \to \mathcal{B}$ be a morphism of left pre-Jacobi-Jordan algebra. Then $ \mathcal{B}_{f}:=(B;\rho,\mu)$ is a representation of $\mathcal{A}$ where $ \rho $ and $ \mu $ are maps defined by 
	\begin{eqnarray}
		\label{34}
		\rho(a)b:=f(a)\intercal b,\ \forall (a,b)\in A\times B, \\
		\label{35}
		\mu(a)b:=b\intercal f(a),\  \forall (a,b)\in A\times B.
	\end{eqnarray}
\end{proposition}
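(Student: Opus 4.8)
The plan is to check directly that the pair $(\rho,\mu)$ satisfies the two defining axioms (\ref{7}) and (\ref{8}) of a representation, invoking only that $f$ is a morphism of left pre-Jacobi-Jordan algebras and that $\mathcal{B}$ itself obeys the identity (\ref{6}). Fix $x,y\in A$ and an arbitrary $b\in B$; note first that well-definedness, i.e. linearity of $\rho(\cdot)$ and $\mu(\cdot)$ in the $A$-slot, is immediate from the bilinearity of $\intercal$ and the linearity of $f$, so only the two cochain identities remain to be verified.

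First I would establish (\ref{7}). Because $f$ is a morphism, $f(x\ast y)=f(x\cdot y)+f(y\cdot x)=f(x)\intercal f(y)+f(y)\intercal f(x)$, so applying $\rho(x\ast y)+\rho(x)\circ\rho(y)+\rho(y)\circ\rho(x)$ to $b$ produces
$$\bigl(f(x)\intercal f(y)\bigr)\intercal b+\bigl(f(y)\intercal f(x)\bigr)\intercal b+f(x)\intercal\bigl(f(y)\intercal b\bigr)+f(y)\intercal\bigl(f(x)\intercal b\bigr),$$
which I would regroup as $Aasso_{\intercal}\bigl(f(x),f(y),b\bigr)+Aasso_{\intercal}\bigl(f(y),f(x),b\bigr)$; this is $0$ by (\ref{6}) in $\mathcal{B}$ applied to the triple $(f(x),f(y),b)$. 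In particular this re-establishes that $(B,\rho)$ is a representation of the sub-adjacent Jacobi-Jordan algebra $A^{C}$, as required by the observation following Definition \ref{D7}.

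Next, for (\ref{8}), I would evaluate $\mu(x\cdot y)+\mu(y)\circ\mu(x)+\mu(y)\circ\rho(x)+\rho(x)\circ\mu(y)$ on $b$; using $f(x\cdot y)=f(x)\intercal f(y)$ this equals
$$b\intercal\bigl(f(x)\intercal f(y)\bigr)+\bigl(b\intercal f(x)\bigr)\intercal f(y)+\bigl(f(x)\intercal b\bigr)\intercal f(y)+f(x)\intercal\bigl(b\intercal f(y)\bigr).$$
Pairing the first term with the second and the third with the fourth, this is $Aasso_{\intercal}\bigl(b,f(x),f(y)\bigr)+Aasso_{\intercal}\bigl(f(x),b,f(y)\bigr)$, which vanishes by (\ref{6}) in $\mathcal{B}$, this time for the triple $(b,f(x),f(y))$. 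With both identities verified, $(B;\rho,\mu)$ is a representation of $\mathcal{A}$.

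I do not expect a genuine obstacle: the computation is a short expansion followed by recognizing the anti-associator of $\mathcal{B}$. The only point requiring care is the bookkeeping, namely ordering the arguments of $Aasso_{\intercal}$ so that the left skew-symmetry identity (\ref{6}) of $\mathcal{B}$ applies verbatim (in the first check the repeated pair occupies the first two slots with $b$ fixed last, while in the second check $f(x)$ and $b$ are the pair that swaps, with $f(y)$ fixed last), together with remembering to use $f$ in the form $f(x\ast y)=f(x)\intercal f(y)+f(y)\intercal f(x)$ rather than presupposing compatibility of $f$ with $\ast$.
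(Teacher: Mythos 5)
Your proof is correct and follows essentially the same route as the paper: expand $\rho(x\ast y)+\rho(x)\rho(y)+\rho(y)\rho(x)$ and $\mu(x\cdot y)+\mu(y)\mu(x)+\mu(y)\rho(x)+\rho(x)\mu(y)$ on an element $b$, use the morphism property of $f$, and recognize the results as $Aasso_{\intercal}(f(x),f(y),b)+Aasso_{\intercal}(f(y),f(x),b)$ and $Aasso_{\intercal}(b,f(x),f(y))+Aasso_{\intercal}(f(x),b,f(y))$, each vanishing by (\ref{6}) in $\mathcal{B}$. The groupings and the invocation of the left skew-symmetry identity match the paper's proof exactly.
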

\begin{proof}
	Let $ (x,y,a)\in A\otimes A\otimes B $. Using $f$ is a morphism, we have 	
	\begin{eqnarray}
		&& \rho(x\cdot y+y\cdot x)a+\rho(x)\circ \rho(y)a+\rho(y)\circ \rho(x)a\overset{(\ref{34})}{=}\bigg(f(x)\intercal f(y)\bigg)\intercal a+f(x)\intercal\bigg( f(y)\intercal a\bigg)\nonumber\\
		&&
		+\bigg (f(y)\intercal f(x)\bigg)\intercal a+ f(y)\intercal \bigg( f(x)\intercal a\bigg)\overset{(\ref{1}),(\ref{6})}{=}0.\nonumber
	\end{eqnarray}
	Finally, we compute
	\begin{eqnarray}
		&&\mu(x\cdot y)a+\mu(y)\circ\mu(x)a+\mu(y)\circ\rho(x)a+\rho(x)\circ\mu(y)a\nonumber\\
		&&\overset{(\ref{34}),
			(\ref{35})}{=}a\intercal\bigg(f(x)\intercal f(y)\bigg)
		+\bigg(a\intercal f(x)\bigg)\intercal f(y)+\bigg(f(x)\intercal a\bigg)\intercal f(y)+f(x)\intercal\bigg(a\intercal f(y)\bigg)\nonumber\\
		&&\overset{(\ref{1}),(\ref{6})}{=}0.\nonumber
	\end{eqnarray}
	Therefore, $ \mathcal{B}_{f}:=(B;\rho,\mu)$ is a representation of $\mathcal{A}$. 	
\end{proof}
	\begin{example}
		Define the left multiplication $ L : A\to gl(A) $ and the right multiplication $ R : A\to gl(A)  $ by
		$ L_{x}y=x\cdot y $, and $ R_{x}y=y\cdot x $, for all $ x,y\in A $. Then $ (A,L,R) $ is a representation of $ (A, \cdot) $, called the regular representation.
	\end{example}
	\begin{proposition}\label{Pro13}
		Let $ (V,\rho,\mu) $ be a representation of a left pre-Jacobi-Jordan algebra $ (A, \cdot) $ and $ V^{*}=\{f : V\to \mathbb{K}\} $ be a vector space of linear functions on $ V $. Define
		\begin{eqnarray}
			\label{53}
			(\rho^{*}(x)f)v=f(\rho(x)v),  
			& \forall (f,v,x)\in V^{*}\times V\times A,
		\end{eqnarray}
		\begin{eqnarray}
			\label{54}
			(\mu^{*}(x)f)v=f(\mu(x)v),& \forall (f,v,x)\in V^{*}\times V\times A.  
		\end{eqnarray}
		Suppose that 
		\begin{eqnarray}
			\mu(x)\circ \mu(y)=\mu(y)\circ \mu(x),  \mbox{ $\forall  x,y\in A$ },\label{hp}
		\end{eqnarray}
		then $ (V^{*};\rho^{*},\mu^{*}) $  is  a representation of the left pre-Jacobi-Jordan algebra $ (A, \cdot) $.
	\end{proposition}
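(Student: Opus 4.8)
The plan is to verify directly the two defining identities \eqref{7} and \eqref{8} of Definition \ref{D7} for the pair $(\rho^*,\mu^*)$, by evaluating each identity on an arbitrary functional $f\in V^*$ and then on an arbitrary vector $v\in V$; in this way every computation is pushed back inside $V$, where the identities we need are already available. The one structural observation used throughout is that dualizing reverses the order of composition: from \eqref{53} one gets $\bigl(\rho^*(x)\circ\rho^*(y)f\bigr)(v)=\bigl(\rho^*(y)f\bigr)\bigl(\rho(x)v\bigr)=f\bigl(\rho(y)\circ\rho(x)v\bigr)$, and likewise $\bigl(\mu^*(x)\circ\mu^*(y)f\bigr)(v)=f\bigl(\mu(y)\circ\mu(x)v\bigr)$, $\bigl(\mu^*(y)\circ\rho^*(x)f\bigr)(v)=f\bigl(\rho(x)\circ\mu(y)v\bigr)$ and $\bigl(\rho^*(x)\circ\mu^*(y)f\bigr)(v)=f\bigl(\mu(y)\circ\rho(x)v\bigr)$. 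Linearity of $\rho$ and $\mu$ makes $\rho^*,\mu^*\colon A\to gl(V^*)$ well defined.

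For \eqref{7}, I would compute $\bigl(\rho^*(x\ast y)f+\rho^*(x)\circ\rho^*(y)f+\rho^*(y)\circ\rho^*(x)f\bigr)(v)$. By the order-reversal remark this equals $f\bigl(\rho(x\ast y)v+\rho(y)\circ\rho(x)v+\rho(x)\circ\rho(y)v\bigr)$, which is $f(0)=0$ because $(V,\rho)$ is a representation of the sub-adjacent Jacobi-Jordan algebra $A^C$, i.e.\ satisfies \eqref{7} (equivalently \eqref{28}). This step needs no hypothesis on $\mu$.

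For \eqref{8}, I would expand $\bigl(\mu^*(x\cdot y)f+\mu^*(y)\circ\mu^*(x)f+\mu^*(y)\circ\rho^*(x)f+\rho^*(x)\circ\mu^*(y)f\bigr)(v)$ term by term via \eqref{53}--\eqref{54}; the four contributions become $f$ applied to $\mu(x\cdot y)v$, $\mu(x)\circ\mu(y)v$, $\rho(x)\circ\mu(y)v$ and $\mu(y)\circ\rho(x)v$ respectively. The only discrepancy with \eqref{8} for $(V,\rho,\mu)$ is that the second term is $\mu(x)\circ\mu(y)$ rather than $\mu(y)\circ\mu(x)$; this is exactly where the commutation hypothesis \eqref{hp} intervenes, letting us replace $\mu(x)\circ\mu(y)$ by $\mu(y)\circ\mu(x)$. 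After this substitution the bracketed vector is precisely $\mu(x\cdot y)v+\mu(y)\circ\mu(x)v+\mu(y)\circ\rho(x)v+\rho(x)\circ\mu(y)v=0$ by \eqref{8} for $(V,\rho,\mu)$, so the whole expression vanishes; since $f$ and $v$ are arbitrary, $(V^*;\rho^*,\mu^*)$ is a representation. I do not anticipate any genuine obstacle beyond careful bookkeeping of the order of composition under dualization and pinpointing the single mismatched pair of $\mu$'s that \eqref{hp} is designed to clear.
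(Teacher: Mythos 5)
Your proposal is correct and follows essentially the same route as the paper's proof: dualize, use the order reversal of composition to push everything back into $V$, dispose of \eqref{7} directly, and invoke the commutation hypothesis \eqref{hp} to fix the single mismatched term $\mu(x)\circ\mu(y)$ versus $\mu(y)\circ\mu(x)$ before applying \eqref{8}. No gaps.
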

	\begin{proof} Let $ x,y \in A$ and $ (f,v)\in V^{*}\otimes V $. Suppose that (\ref{hp}) holds.  Next, we compute
		\begin{eqnarray}
			&&\rho^{*}\bigg((x\cdot y+y\cdot x)f\bigg)v+\bigg(\rho^{*}(x)\circ\rho^{*}(y)f\bigg)v+\bigg(\rho^{*}(y)\circ\rho^{*}(x)f\bigg)v
			\nonumber\\
			&&\overset{(\ref{53})}{=}f\bigg(\rho(x\cdot y+y\cdot x)v\bigg)
			+f\bigg(\rho(y)\circ\rho(x)v\bigg)+f\bigg(\rho(x)\circ\rho(y)v\bigg)\nonumber\\
			&&=f\bigg(\rho(x\cdot y+y\cdot x)v+\rho(y)\circ\rho(x)v+\rho(x)\circ\rho(y)v\bigg)\overset{(\ref{7})}{=}0\nonumber
		\end{eqnarray} 
		and then we have also 
		\begin{eqnarray}
			&&\mu^{*}\bigg((x\cdot y)f\bigg)v+\bigg(\mu^{*}(y)\circ\mu^{*}(x)f\bigg)v+\bigg(\mu^{*}(y)\circ\rho^{*}(x)f\bigg)v+\bigg(\rho^{*}(x)\circ\mu^{*}(y)f\bigg)v\nonumber\\
			&&\overset{(\ref{53}),(\ref{54})}{=}f\bigg(\mu(x\cdot y)v\bigg)
			+f\bigg(\mu(x)\circ\mu(y)v\bigg)+f\bigg(\rho(x)\circ\mu(y)v\bigg)+f\bigg(\mu(y)\circ\rho(x)v\bigg)\nonumber\\
			&&=f\bigg(\mu(x\cdot y)v+\mu(x)\circ\mu(y)v
			+\rho(x)\circ\mu(y)v+\mu(y)\circ\rho(x)\bigg)\nonumber\\
			&&\overset{\ref{hp}}{=}f\bigg(\mu(x\cdot y)v+\mu(y)\circ\mu(x)v
			+\rho(x)\circ\mu(y)v+\mu(y)\circ\rho(x)\bigg)\overset{(\ref{8})}{=}0.\nonumber
		\end{eqnarray}
		Hence, $ (V^{*};\rho^{*},\mu^{*}) $  is  a representation of $ (A, \cdot) $.	
	\end{proof}
	\begin{proposition}\label{Pro5}
		Let $(A,\cdot)$  be a left pre-Jacobi-Jordan algebra, $V$ be a vector space and $\rho, \mu : A\rightarrow gl(V)$  be linear maps. Then, $(V,\rho,\mu) $ is  a representation of $ (A, \cdot) $ if and only if $ (A\oplus V,\circledast) $ is a left pre-Jacobi-Jordan algebra called the semi-direct product of $(A,\cdot$) by $(V,\rho,\mu)$ denoted by $ A\ltimes V $ where
		\begin{eqnarray}
			\label{9}
			(x,u)\circledast (y,v):=\left( x\cdot y,\rho(x)v+\mu(y)u\right), \mbox{\,\ $\forall  x,y\in A $, and $ u,v\in V $ }.
		\end{eqnarray}
	\end{proposition}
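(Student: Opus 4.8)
The plan is to unravel the single defining identity (\ref{6l}) for the product $\circledast$ on $A\oplus V$ and show it is equivalent to the conjunction of (\ref{7}) and (\ref{8}). Concretely, I would fix $X=(x,u)$, $Y=(y,v)$, $Z=(z,w)$ in $A\oplus V$ and expand
$$(X\circledast Y)\circledast Z+X\circledast(Y\circledast Z)+(Y\circledast X)\circledast Z+Y\circledast(X\circledast Z)$$
using (\ref{9}) twice. Its first ($A$-)component is $(x\cdot y)\cdot z+x\cdot(y\cdot z)+(y\cdot x)\cdot z+y\cdot(x\cdot z)$, which vanishes precisely because $(A,\cdot)$ is a left pre-Jacobi-Jordan algebra, by (\ref{6l}). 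So the whole identity for $\circledast$ reduces to the vanishing of the second ($V$-)component.

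Next I would collect the twelve terms of the $V$-component and sort them according to whether they act on $w$, on $v$, or on $u$. The terms acting on $w$ assemble into $\big(\rho(x\cdot y)+\rho(y\cdot x)+\rho(x)\circ\rho(y)+\rho(y)\circ\rho(x)\big)w=\big(\rho(x\ast y)+\rho(x)\circ\rho(y)+\rho(y)\circ\rho(x)\big)w$, where $x\ast y=x\cdot y+y\cdot x$; the terms acting on $v$ assemble into $\big(\mu(x\cdot z)+\mu(z)\circ\mu(x)+\mu(z)\circ\rho(x)+\rho(x)\circ\mu(z)\big)v$; and the terms acting on $u$ assemble into $\big(\mu(y\cdot z)+\mu(z)\circ\mu(y)+\mu(z)\circ\rho(y)+\rho(y)\circ\mu(z)\big)u$. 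Thus the $V$-component equals $\Phi_1 w+\Phi_2 v+\Phi_3 u$, where $\Phi_1$ is the left-hand side of (\ref{7}) at $(x,y)$ and $\Phi_2,\Phi_3$ are the left-hand side of (\ref{8}) at $(x,z)$ and $(y,z)$ respectively.

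For the ``only if'' direction: if $(V,\rho,\mu)$ is a representation, then $\Phi_1=\Phi_2=\Phi_3=0$, so both components of the displayed sum vanish and $(A\oplus V,\circledast)$ satisfies (\ref{6l}). For the ``if'' direction: if $(A\oplus V,\circledast)$ is a left pre-Jacobi-Jordan algebra then $\Phi_1 w+\Phi_2 v+\Phi_3 u=0$ for all $u,v,w\in V$ and all $x,y,z\in A$; taking $u=v=0$ with $w$ arbitrary forces $\Phi_1=0$, i.e. (\ref{7}), and taking $u=w=0$ with $v$ arbitrary forces $\Phi_2=0$, which is (\ref{8}) since $x,z$ range over all of $A$ (the $u$-terms yield the same identity after relabelling). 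Hence $(V,\rho,\mu)$ is a representation.

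The only real work is the careful bookkeeping of the twelve $V$-terms and verifying that they regroup exactly into $\Phi_1,\Phi_2,\Phi_3$ with the correct arguments; I do not anticipate any conceptual obstacle, the argument being a direct, if slightly tedious, expansion together with the elementary observation that the coefficients of the independent data $u,v,w$ can be isolated by specializing the others to zero.
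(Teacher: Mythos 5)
Your proposal is correct and follows essentially the same route as the paper: expand the defining identity for $\circledast$, observe that the $A$-component vanishes by (\ref{6l}), and regroup the $V$-component into the left-hand sides of (\ref{7}) and (\ref{8}) applied to $w$, $v$, $u$ respectively. Your explicit specialization (setting two of $u,v,w$ to zero) to isolate each identity in the converse direction is a welcome extra precision that the paper leaves implicit, but it is not a different argument.
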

	\begin{proof}
		Let $ a=(x,u) $, $ b=(y,v) $ and $ c=(z,w) $ be in $ A\oplus V $. Then,
		\begin{eqnarray}
			&&Aasso_{\circledast}(a,b,c)+Aasso_{\circledast}(b,a,c)=(a\circledast b)\circledast c+a\circledast(b\circledast c)
			+(b\circledast a)\circledast c+b\circledast(a\circledast c)\nonumber\\
			&&
			\overset{(\ref{9})}{=}((x\cdot y)\cdot z,\rho(x\cdot y)w+\mu(z)\circ\rho(x)v+\mu(z)\circ\mu(y)u)\nonumber\\
			&&+\left( (y\cdot x)\cdot z,\rho(y\cdot x)w
			+\mu(z)\circ\rho(y)u+\mu(z)\circ\mu(x)v\right)\nonumber\\
			&&+(x\cdot (y\cdot z),\rho(x)\circ\rho(y)w+\rho(x)\circ\mu(z)v+\mu(y\cdot z)u)\nonumber\\
			&&+(y\cdot (x\cdot z),\rho(y)\circ\rho(x)w+\rho(y)\circ\mu(z)u+\mu(x\cdot z)v)\nonumber\\
			&&	=((x\cdot y)\cdot z+x\cdot (y\cdot z)+(y\cdot x)\cdot z+y\cdot (x\cdot z),\rho(x\cdot y)w+\rho(y\cdot x)w+\rho(x)\circ\rho(y)w\nonumber\\
			&&
			+\rho(y)\circ\rho(x)w+\mu(y\cdot z)u+\mu(z)\circ\mu(y)u+
			\mu(z)\circ\rho(y)u+\rho(y)\circ\mu(z)u+\mu(x\cdot z)v\nonumber\\
			&&
			+\mu(z)\circ\mu(x)v+\mu(z)\circ\rho(x)v+\rho(x)\circ\mu(z)v)
			\overset{(\ref{1})}{=}(Aasso(x,y,z)+Aasso(y,x,z),\nonumber\\
			&&\rho(x\ast y)w+\rho(x)\circ\rho(y)w+\rho(y)\circ\rho(x)w+\mu(y\cdot z)u
			+\mu(z)\circ\mu(y)u+\mu(z)\circ\rho(y)u\nonumber\\
			&&+\rho(y)\circ\mu(z)u+\mu(x\cdot z)v+\mu(z)\circ\mu(x)v+\mu(z)\circ\rho(x)v+\rho(x)\circ\mu(z)v).\nonumber
		\end{eqnarray} 
		Hence by (\ref{6}) in $(A,\cdot)$, the identity (\ref{6})  holds in $(A\oplus V,\circledast)$ if and only if (\ref{7}) and (\ref{8}) hold for $(V,\rho,\mu).$
	\end{proof}
	\begin{proposition}\label{Pro6}
		Let $ (V,\rho,\mu) $ be a representation of a left pre-Jacobi-Jordan algebra $ (A,\cdot)$. Then $ (V,\rho+\mu) $ is a representation of the sub-adjacent Jacobi-Jordan algebra $ A^C=(A,\ast) $.
	\end{proposition}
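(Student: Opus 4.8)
The plan is to verify directly that the linear map $\sigma := \rho + \mu : A \to gl(V)$ satisfies the defining identity (\ref{28}) of a representation of the sub-adjacent Jacobi-Jordan algebra $A^C = (A,\ast)$, i.e.\ that $\sigma(x\ast y) + \sigma(x)\circ\sigma(y) + \sigma(y)\circ\sigma(x) = 0$ for all $x,y \in A$. Since $\sigma$ is visibly linear, this identity is the only thing that needs checking.

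First I would expand the left-hand side by bilinearity of composition: $\sigma(x)\circ\sigma(y) = \rho(x)\circ\rho(y) + \rho(x)\circ\mu(y) + \mu(x)\circ\rho(y) + \mu(x)\circ\mu(y)$, and symmetrically for $\sigma(y)\circ\sigma(x)$; moreover $\sigma(x\ast y) = \rho(x\ast y) + \mu(x\ast y)$, and since $x\ast y = x\cdot y + y\cdot x$ is symmetric in $x,y$ we have $\mu(x\ast y) = \mu(x\cdot y) + \mu(y\cdot x)$. Then I would regroup all the resulting terms into two packets. The first packet is $\rho(x\ast y) + \rho(x)\circ\rho(y) + \rho(y)\circ\rho(x)$, which vanishes by (\ref{7}) — this is just the observation, recorded after Definition \ref{D7}, that $(V,\rho)$ is already a representation of $A^C$. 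The second packet collects everything else, namely $\mu(x\cdot y) + \mu(y\cdot x) + \mu(x)\circ\mu(y) + \mu(y)\circ\mu(x) + \rho(x)\circ\mu(y) + \mu(y)\circ\rho(x) + \rho(y)\circ\mu(x) + \mu(x)\circ\rho(y)$.

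To kill the second packet I would add the identity (\ref{8}) evaluated at the pair $(x,y)$ to the same identity evaluated at $(y,x)$: the first instance reads $\mu(x\cdot y) + \mu(y)\circ\mu(x) + \mu(y)\circ\rho(x) + \rho(x)\circ\mu(y) = 0$, the second reads $\mu(y\cdot x) + \mu(x)\circ\mu(y) + \mu(x)\circ\rho(y) + \rho(y)\circ\mu(x) = 0$, and summing them produces exactly the second packet, which is therefore zero. Adding the two vanishing packets gives $\sigma(x\ast y) + \sigma(x)\circ\sigma(y) + \sigma(y)\circ\sigma(x) = 0$, as required.

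There is no genuine obstacle here; the only point that needs a little care is the bookkeeping in the second step, i.e.\ checking that when $\sigma(x)\circ\sigma(y)$ and $\sigma(y)\circ\sigma(x)$ are fully expanded, the eight quadratic and cross terms in $\rho$ and $\mu$ reassemble precisely into the symmetrization of (\ref{8}) over $(x,y)$ and $(y,x)$, rather than into some other combination. Once the grouping is arranged correctly the conclusion is immediate, and in particular no auxiliary hypothesis such as (\ref{hp}) is needed.
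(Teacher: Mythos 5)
Your proposal is correct and follows essentially the same route as the paper's proof: expand $(\rho+\mu)(x\ast y)+(\rho+\mu)(x)\circ(\rho+\mu)(y)+(\rho+\mu)(y)\circ(\rho+\mu)(x)$, peel off the packet $\rho(x\ast y)+\rho(x)\circ\rho(y)+\rho(y)\circ\rho(x)$ killed by (\ref{7}), and recognize the remaining eight terms as the sum of (\ref{8}) evaluated at $(x,y)$ and at $(y,x)$. Your bookkeeping of the cross terms matches the paper's regrouping exactly, and you are right that hypothesis (\ref{hp}) plays no role here.
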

	\begin{proof}
		Let $ x,y\in A $, then  we have			
		\begin{eqnarray}
			&&(\rho+\mu)(x\ast y)+(\rho+\mu)(x)\circ (\rho+\mu)(y)+(\rho+\mu)(y)\circ(\rho+\mu)(x)=\rho(x\ast y)+\mu(x\ast y)\nonumber\\
			&&
			+\rho(x)\circ\rho(y)+\rho(x)\circ\mu(y)+\mu(x)\circ\rho(y)+\mu(x)\circ\mu(y)+\rho(y)\circ\rho(x)+\rho(y)\circ\mu(x)\nonumber\\
			&&
			+\mu(y)\circ\rho(x)+\mu(y)\circ\mu(x)
			=\rho(x\ast y)+\rho(x)\circ\rho(y)+\rho(y)\circ\rho(x)+\mu(x\cdot y)\nonumber\\
			&&+\mu(y)\circ\mu(x)+\mu(y)\circ\rho(x)+\rho(x)\circ\mu(y)
			+\mu(y\cdot x)+\mu(x)\circ\mu(y)+\mu(x)\circ\rho(y)\nonumber\\
			&&+\rho(y)\circ\mu(x)\overset{(\ref{7}),(\ref{8})}{=}0.\nonumber
		\end{eqnarray}
		Therefore $ (V,\rho+\mu) $ is a representation of $(A,\ast) $. 
	\end{proof}
	Next, we introduce  
	derivations (respectively antiderivations) of pre-Jacobi-Jordan algebras with values in  representations.
	\begin{definition} \label{repder}
		Let $(A,\cdot)$ be a pre-Jacobi-Jordan algebra, $(V,\rho,\mu)$ be a representation of $(A,\cdot).$ A linear map $D: A\rightarrow V$ is called 
		\begin{enumerate}
			\item a derivation of $(A,\cdot)$ with values in $(V,\rho,\mu)$ if
			\begin{eqnarray}
				D(u\cdot v)=\mu(v)D(u)+\rho(u)D(v) \mbox{ $\forall u,v\in A,$}\label{vder1}
			\end{eqnarray}
			\item an antiderivation of $(A,\cdot)$  with values in $(V,\rho,\mu)$ if
			\begin{eqnarray}
				D(u\cdot v)=-\mu(v)D(u)-\rho(u)D(v) \mbox{ $\forall u,v\in A.$}\label{vader1}
			\end{eqnarray}
		\end{enumerate}	
	\end{definition}
	Denote by $Der(A,V)$ (resp. $ADer(A,V)$), the set of derivations (resp. antiderivations) of  $(A,\cdot)$ with values in $(V,\rho,\mu).$ 
	\begin{definition}
		Let $(A,\cdot)$ be a left pre-Jacobi-Jordan algebra et $(V,\rho,\mu)$ be a representation of $(A,\cdot).$ Then, the subspace
		$$V^{r\cdot Aas}:=\{v\in V,\rho(x\cdot y)v+\rho(x)\rho(y)v=0, \forall (x,y)\in A^{\otimes 2}\}$$
		$$(\mbox{ resp. $V^{l\cdot Aas}:=\{v\in V,\mu(x\cdot y)v+\mu(y)\mu(x)v=0, \forall (x,y)\in A^{\otimes 2}\}$} )$$
		is called a right (resp. left) anti-associative invariant subspace of $(V,\rho,\mu)$,
		$$V^{r\cdot inv}:=\{v\in V,\rho(x)v=0, \forall x\in A\} \mbox{( resp. $V^{l\cdot inv}:=\{v\in V,\mu(x)v=0, \forall x\in A\} )$ }$$ is called a right (resp. left)  invariant  subspace of $(V,\rho,\mu)$ and
		$$V^{inv}:=\{v\in V,\rho(x)v+\mu(x)v=0, \forall x\in A\}$$
		is called an  invariant  subspace of $(V,\rho,\mu).$ If $V=A$ is the regular representation, then $A^{l\cdot Aas}$ is called a right anti-associative center, $A^{r\cdot inv}$
		(resp. $A^{l\cdot inv}$) is called a right(resp. left) center and $A^{ inv}$ is called a center of $(A,\cdot)$.
	\end{definition}
	\begin{lemma}
		Let $(A,\cdot)$ be a left pre-Jacobi-Jordan algebra and $(V,\rho,\mu)$ be a representation of $(A,\cdot).$ Then,
		\begin{enumerate}
			\item $V^{l\cdot inv}$ (or $V^{r\cdot inv}$) is closed under $\mu$ if and if it is also closed under $\rho.$
			\item $V^{l\cdot inv}\subset V^{l\cdot Aas}$ and  $V^{r\cdot inv}\subset V^{r\cdot Aas}.$
		\end{enumerate}	
	\end{lemma}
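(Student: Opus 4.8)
The plan is to read both items straight off the two defining identities \eqref{7} and \eqref{8} of a representation, using the one elementary remark available: a vector annihilated by every $\mu(z)$ (resp.\ by every $\rho(z)$) makes almost every term of \eqref{7}--\eqref{8} collapse when the identity is evaluated on it.

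I would settle the second item first, as it is purely formal. Given $v\in V^{l\cdot inv}$, i.e.\ $\mu(z)v=0$ for all $z\in A$, fix $x,y\in A$: specialising $z=x\cdot y$ gives $\mu(x\cdot y)v=0$, while $\mu(x)v=0$ forces $\mu(y)\mu(x)v=0$; hence $\mu(x\cdot y)v+\mu(y)\mu(x)v=0$, which is exactly the membership condition for $V^{l\cdot Aas}$, so $V^{l\cdot inv}\subset V^{l\cdot Aas}$. The inclusion $V^{r\cdot inv}\subset V^{r\cdot Aas}$ falls out of the same computation with $\rho$ in the role of $\mu$.

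For the first item I would begin by noting that one implication is automatic in each case: if $v\in V^{l\cdot inv}$ then $\mu(x)v=0\in V^{l\cdot inv}$ for all $x$, so $V^{l\cdot inv}$ is always stable under $\mu$, and dually $V^{r\cdot inv}$ is always stable under $\rho$. The real content of the equivalence is therefore stability under the \emph{other} operator. To show $V^{l\cdot inv}$ is $\rho$-stable, I would fix $v\in V^{l\cdot inv}$ and $x\in A$ and evaluate \eqref{8} on $v$ for an arbitrary $y$: the terms $\mu(x\cdot y)v$, $\mu(y)\mu(x)v$ and $\rho(x)\mu(y)v$ all vanish because $v$ is killed by every $\mu(z)$, leaving $\mu(y)\bigl(\rho(x)v\bigr)=0$ for all $y$, i.e.\ $\rho(x)v\in V^{l\cdot inv}$. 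For the mirror statement that $V^{r\cdot inv}$ is $\mu$-stable I would take $v\in V^{r\cdot inv}$, $x\in A$, and again evaluate \eqref{8} on $v$; this time only the term $\mu(y)\rho(x)v$ drops out, and one is left with an identity expressing $\rho(z)\bigl(\mu(x)v\bigr)$ through $\mu$-only expressions applied to $v$, which must still be argued to vanish.

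I expect this last step to be the main obstacle. Identity \eqref{8} is not symmetric in $\rho$ and $\mu$: it converts ``$\mu(\cdot)v=0$'' into information about $\rho(\cdot)v$ for free, but the reverse passage is not automatic. Concretely, \eqref{8} evaluated on $v\in V^{r\cdot inv}$ only yields $\rho(z)\mu(x)v=-\mu(z\cdot x)v-\mu(x)\mu(z)v$, so the $\mu$-stability of $V^{r\cdot inv}$ is equivalent to the inclusion $V^{r\cdot inv}\subset V^{l\cdot Aas}$; extracting that inclusion from \eqref{7}--\eqref{8} alone — possibly with the help of the second item, or of a further commutativity-type hypothesis on $\mu$ (cf.\ \eqref{hp}) — is the crux of the argument, and is where I would concentrate the effort.
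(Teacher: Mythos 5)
Your handling of item (2) and of the $V^{l\cdot inv}$ half of item (1) is correct, and it is as close to the paper's argument as one can get: the paper's entire proof is the phrase ``Straightforward computations,'' so there is no more detailed argument to compare against. Evaluating (\ref{8}) on a vector $v$ with $\mu(\cdot)v=0$ does kill every term except $\mu(y)\rho(x)v$, which gives the $\rho$-stability of $V^{l\cdot inv}$, and both inclusions in item (2) are immediate from the definitions exactly as you write them.

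The obstacle you isolate at the end is not merely the hard step of the proof --- it is a genuine failure of the statement, so your refusal to paper over it is warranted. As you observe, for $v\in V^{r\cdot inv}$ identity (\ref{8}) only yields $\rho(z)\mu(x)v=-\mu(z\cdot x)v-\mu(x)\mu(z)v$, so $\mu$-stability of $V^{r\cdot inv}$ is equivalent to $V^{r\cdot inv}\subset V^{l\cdot Aas}$, and this does not follow from (\ref{7})--(\ref{8}). Concretely, take $A=\mathbb{K}e$ with the zero product (a pre-Jacobi-Jordan algebra), $V=\mathbb{K}^{2}$, $\rho(e)=\left(\begin{smallmatrix}0&1\\0&0\end{smallmatrix}\right)$ and $\mu(e)=\left(\begin{smallmatrix}0&-1\\1&0\end{smallmatrix}\right)$. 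Condition (\ref{7}) reduces to $\rho(e)^{2}=0$ and condition (\ref{8}) to $\mu(e)^{2}+\mu(e)\rho(e)+\rho(e)\mu(e)=0$; both hold, so $(V,\rho,\mu)$ is a representation, and even the auxiliary hypothesis (\ref{hp}) is satisfied since $A$ is one-dimensional. Here $V^{r\cdot inv}=\ker\rho(e)$ is spanned by $(1,0)^{T}$ and is trivially closed under $\rho$, yet $\mu(e)(1,0)^{T}=(0,1)^{T}\notin\ker\rho(e)$, so it is not closed under $\mu$; equivalently, $\mu(e\cdot e)v+\mu(e)\mu(e)v=-v\neq0$, so $v\notin V^{l\cdot Aas}$. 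Hence the ``if and only if'' of item (1) is false for $V^{r\cdot inv}$ as stated; the assertion should be restricted to $V^{l\cdot inv}$, or the inclusion $V^{r\cdot inv}\subset V^{l\cdot Aas}$ must be added as a hypothesis. Your diagnosis of where the effort would have to be concentrated is precisely the point at which the lemma itself breaks down.
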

	\begin{proof}
		Straightforward computations.
	\end{proof}
	\begin{proposition}\label{inderAV}
		For any $w\in V^{r.Aas}$ i.e., $w\in V$ such that
		\begin{eqnarray}
			\rho(u\cdot v)w+\rho(u)\rho(v)w=0 \mbox{ for all $(u,v)\in A^{\otimes 2} $},\label{vid1}
		\end{eqnarray}
		define 
		$D_w: A\rightarrow V $ by
		\begin{eqnarray}
			D_w(u):=\rho(u)w+\mu(u)w, \mbox{    $\forall u\in A$}.\label{vid2}
		\end{eqnarray}
		Then $D_w$ is an antiderivation of $(A,\cdot)$ with values in $(V,\rho,\mu)$ called an inner antiderivation of $(A,\cdot)$ with values in $(V,\rho,\mu).$
	\end{proposition}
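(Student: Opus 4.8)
The plan is to verify directly that $D_w$ satisfies the antiderivation identity (\ref{vader1}), i.e. that
$$D_w(u\cdot v)+\mu(v)D_w(u)+\rho(u)D_w(v)=0\qquad\text{for all }u,v\in A,$$
by substituting the defining formula (\ref{vid2}) and reorganizing the resulting ten terms into two groups, one annihilated by the hypothesis $w\in V^{r\cdot Aas}$ and one annihilated by the representation axiom (\ref{8}).

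First I would expand the left-hand side using (\ref{vid2}): the term $D_w(u\cdot v)$ contributes $\rho(u\cdot v)w+\mu(u\cdot v)w$, while $\mu(v)D_w(u)=\mu(v)\rho(u)w+\mu(v)\mu(u)w$ and $\rho(u)D_w(v)=\rho(u)\rho(v)w+\rho(u)\mu(v)w$. Collecting, the expression to be shown to vanish is
$$\bigl(\rho(u\cdot v)w+\rho(u)\rho(v)w\bigr)+\bigl(\mu(u\cdot v)w+\mu(v)\mu(u)w+\mu(v)\rho(u)w+\rho(u)\mu(v)w\bigr).$$
The first parenthesis is exactly the left-hand side of (\ref{vid1}), hence it is zero by the assumption $w\in V^{r\cdot Aas}$. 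The second parenthesis is precisely $\bigl(\mu(u\cdot v)+\mu(v)\circ\mu(u)+\mu(v)\circ\rho(u)+\rho(u)\circ\mu(v)\bigr)w$, which vanishes by applying the representation identity (\ref{8}) with $x=u$, $y=v$. Adding the two zero contributions gives the desired identity, so $D_w$ is an antiderivation of $(A,\cdot)$ with values in $(V,\rho,\mu)$.

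I do not expect any genuine obstacle here: the argument is a short bookkeeping computation. The only point worth emphasizing is that, unlike Proposition \ref{Pro13}, no commutativity condition such as (\ref{hp}) on the $\mu$-action is needed; the two structural hypotheses actually used are exactly (\ref{vid1}) and the defining axiom (\ref{8}) of a representation. I would close by remarking that the name \emph{inner antiderivation} is justified because $D_w$ is built canonically from a single element $w$ of the invariant-type subspace $V^{r\cdot Aas}$, in analogy with the inner derivations of the associated Jacobi-Jordan algebra.
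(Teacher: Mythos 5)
Your proposal is correct and matches the paper's proof in substance: both arguments expand $D_w(u\cdot v)$ via (\ref{vid2}) and dispose of the resulting terms using exactly the hypotheses (\ref{vid1}) and (\ref{8}), the only difference being that the paper rewrites $D_w(u\cdot v)$ step by step into $-\rho(u)D_w(v)-\mu(v)D_w(u)$ while you move everything to one side and group; also, the expansion yields six terms rather than ten, a harmless miscount. Your closing observation that no condition like (\ref{hp}) is needed here is accurate.
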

	\begin{proof}
		Let $w\in V^{r.Aas}.$ Then $D_w$ is a linear map.
		Pick $u,v\in A,$ then,
		\begin{eqnarray}
			&&	D_w(u\cdot v)=\rho(u\cdot v)w+\mu(u\cdot v)w=-\rho(u)\rho(v)w+\mu(u\cdot v)w \mbox{ ( by (\ref{vid1}) )}\nonumber\\
			&&=-\rho(u)\rho(v)w-\mu(v)\mu(u)w-
			\mu(v)\rho(u)w-\rho(u)\mu(v)w \mbox{ ( by (\ref{8}) )}\nonumber\\
			&&=
			-\rho(u)(\rho(v)w+\mu(v)w) -\mu(v)(\rho(u)w+\mu(u)w)=-\rho(u)D_w(v)-\mu(v)D_w(u).\nonumber
		\end{eqnarray}
		Therefore, $D_w$ is an antiderivation of  $(A,\cdot)$ with values in $(V,\rho,\mu)$. 
	\end{proof}	
	Denote by $IADer(A,V)$, the set of inner antiderivations of  $(A,\cdot)$ with values in $(V,\rho,\mu).$ Then, 
	$$IADer(A,V):=\{\mu(-)w+\rho(-)w,\ w\in V^{r.Aas}\}.$$ 
	\\	
	The notion of derivations (respectively antiderivations) of  $(A,\cdot)$ with values in the adjoint representation $(A,L,R)$ is called  derivations (respectively antiderivations)   $(A,\cdot)$ as we will see in the following definition.
	\begin{definition} \label{DfDer}
		Let $(A,\cdot)$ be a pre-Jacobi-Jordan algebra.  A linear map $D: A\rightarrow A$ is called 
		\begin{enumerate}
			\item a derivation of $(A,\cdot)$  if
			\begin{eqnarray}
				D(u\cdot v)=D(u)\cdot v+u\cdot D(v) \mbox{ $\forall u,v\in A,$}\label{der1}
			\end{eqnarray}
			\item an antiderivation of $(A,\cdot)$  if
			\begin{eqnarray}
				D(u\cdot v)=-D(u)\cdot v-u\cdot D(v) \mbox{ $\forall u,v\in A.$}\label{ader1}
			\end{eqnarray}
		\end{enumerate}	
	\end{definition}
	Denote by $Der(A)$ (resp. $ADer(A)$) the set of derivations (resp. antiderivations)  of a  pre-Jacobi-Jordan algebra $(A,\cdot).$\\  
	
	Now, Proposition \ref{inderAV} reads as:
	\begin{proposition}
		For any $w\in A^{r.Aas}$, i.e., $w\in A$ such that
		\begin{eqnarray}
			(u\cdot v)\cdot w+u\cdot (v\cdot w)=0 \mbox{ for all $(u,v)\in A^{\otimes 2} $},\label{id1}
		\end{eqnarray}
		define 
		$D_w: A\rightarrow A $ by
		\begin{eqnarray}
			D_w(u)&:=&R_w(u)+L_w(u)
			=u\cdot w +w\cdot u, \mbox{    $\forall u\in A$}.\label{id2}
		\end{eqnarray}
		Then $D_w$ is an antiderivation  called an inner antiderivation of $(A,\cdot).$
	\end{proposition}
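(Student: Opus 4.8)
The plan is to recognize this statement as the specialization of Proposition \ref{inderAV} to the adjoint representation $(A,L,R)$. Taking $V=A$, $\rho=L$ and $\mu=R$ in Proposition \ref{inderAV}, the hypothesis $w\in V^{r\cdot Aas}$, namely $\rho(u\cdot v)w+\rho(u)\rho(v)w=0$, becomes $L_{u\cdot v}w+L_u L_v w=(u\cdot v)\cdot w+u\cdot(v\cdot w)=0$, which is precisely condition (\ref{id1}); likewise the inner antiderivation $D_w(u)=\rho(u)w+\mu(u)w$ becomes $L_u w+R_u w=u\cdot w+w\cdot u$, which is (\ref{id2}). Since $(A,L,R)$ is indeed a representation of $(A,\cdot)$ (the regular representation discussed above), Proposition \ref{inderAV} applies verbatim and yields that $D_w$ is an antiderivation of $(A,\cdot)$ in the sense of Definition \ref{DfDer}.

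For a self-contained argument one may instead compute directly, checking $D_w(u\cdot v)=-D_w(u)\cdot v-u\cdot D_w(v)$ for all $u,v\in A$, i.e.
\[
(u\cdot v)\cdot w+w\cdot(u\cdot v)+(u\cdot w)\cdot v+(w\cdot u)\cdot v+u\cdot(v\cdot w)+u\cdot(w\cdot v)=0 .
\]
I would group the six terms as $\big[(u\cdot v)\cdot w+u\cdot(v\cdot w)\big]+\big[(u\cdot w)\cdot v+u\cdot(w\cdot v)\big]+\big[(w\cdot u)\cdot v+w\cdot(u\cdot v)\big]$. The first bracket equals $Aasso(u,v,w)$, which vanishes by the hypothesis (\ref{id1}) on $w$; the second and third brackets equal $Aasso(u,w,v)$ and $Aasso(w,u,v)$ respectively, and their sum vanishes by the left pre-Jacobi-Jordan identity (\ref{6}) applied to the triple $(u,w,v)$. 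This gives the asserted identity.

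There is essentially no obstacle here. The only points requiring care are the bookkeeping of which triple each anti-associator is evaluated at, and the observation that the structure maps of the regular representation are exactly $L$ and $R$, so that $V^{r\cdot Aas}$ becomes $A^{r\cdot Aas}$ and (\ref{vid1})–(\ref{vid2}) become (\ref{id1})–(\ref{id2}). The mathematical content is already carried by Proposition \ref{inderAV}; the present proposition is just its translation into the language of Definition \ref{DfDer}.
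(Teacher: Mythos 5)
Your proof is correct and matches the paper's: the paper itself introduces this proposition with the remark that it is Proposition \ref{inderAV} read in the regular representation, and its proof is exactly your direct computation, rewriting $(u\cdot v)\cdot w$ via (\ref{id1}) and $w\cdot(u\cdot v)$ via the identity (\ref{6l}) applied to the triple $(w,u,v)$, which is the same grouping into $Aasso(u,v,w)$ and $Aasso(u,w,v)+Aasso(w,u,v)$ that you use.
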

	\begin{proof}
		Let $w\in A^{r.Aas}.$ Then $D_w$ is a linear map.
		Pick $u,v\in A.$ Then,
		\begin{eqnarray}
			&&	D_w(u\cdot v)=(u\cdot v)\cdot w+w\cdot (u\cdot v)=-u\cdot(v\cdot w)+w\cdot (u\cdot v) \mbox{ ( by (\ref{id1}) )}\nonumber\\
			&&=-u\cdot(v\cdot w)-(w\cdot u)\cdot v-(u\cdot w)\cdot v-u\cdot (w\cdot v) \mbox{ ( by (\ref{6l}) )}\nonumber\\
			&&= -(u\cdot w+w\cdot u)\cdot v-u\cdot (v\cdot w+w\cdot v)=-D_w(u)\cdot v-u\cdot D_w(v).\nonumber
		\end{eqnarray}
		Hence, $D_w $ is an antiderivation of $(A,\cdot)$. 
	\end{proof}
	Denote by $IADer(A)$, the set of inner antiderivations of $(A,\cdot).$  Then we have:
	$$IADer(A):=
	\{(L_w+R_w,\ w\in A^{r.Aas}\}.$$
	For all $D_1\in Der(A)\cup ADer(A)$ and $D_2\in Der(A)\cup ADer(A)$ define their  commutator by
	$$[D_1,D_2]:=D_1D_2-D_2D_1.$$
	Then, we obtain the following results.
	\begin{lemma}
		Let $(A,\cdot)$ be a pre-Jacobi-Jordan algebra. Then,
		\begin{enumerate}
			\item $[Der(A),Der(A)]\subset Der(A),$
			\item $[ADer(A),ADer(A)]\subset Der(A),$
			\item $[ADer(A),Der(A)]\subset ADer(A).$
		\end{enumerate}
	\end{lemma}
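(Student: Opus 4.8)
The plan is to prove all three inclusions by the same direct computation on the product $\cdot$: given $u,v\in A$ and two maps $D_1,D_2$ lying in $Der(A)$ and/or $ADer(A)$, I would expand $[D_1,D_2](u\cdot v)=D_1D_2(u\cdot v)-D_2D_1(u\cdot v)$ by applying the appropriate Leibniz-type rule, (\ref{der1}) or (\ref{ader1}), first for the inner map and then, to each of the two resulting summands, for the outer map. No use of the left pre-Jacobi-Jordan axiom (\ref{6l}) is needed; only the two rules (\ref{der1}) and (\ref{ader1}), each invoked twice, enter.

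Carrying this out, the expansion of $D_1D_2(u\cdot v)$ splits into two \emph{second-order} terms involving $D_1D_2(u)$ and $D_1D_2(v)$ and two \emph{cross} terms of the shape $D_i(u)\cdot D_j(v)$; similarly for $D_2D_1(u\cdot v)$. The four cross terms occurring in the difference cancel in pairs, regardless of which rules were used, and the four remaining second-order terms reassemble into $\pm\big([D_1,D_2](u)\cdot v+u\cdot[D_1,D_2](v)\big)$. When $D_1,D_2\in Der(A)$ no sign is introduced and one obtains the $+$ case, so $[D_1,D_2]\in Der(A)$; when $D_1,D_2\in ADer(A)$ each of the two rule applications contributes a factor $-1$, hence $(-1)^2=1$ overall and again the $+$ case, so $[D_1,D_2]\in Der(A)$; when exactly one of $D_1,D_2$ is an antiderivation, a single factor $-1$ survives, giving the $-$ case, so $[D_1,D_2]\in ADer(A)$.

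The only point requiring care is the sign bookkeeping, particularly in item (3), where one must keep straight that in $D_2D_1(u\cdot v)$ the rule for the antiderivation $D_1$ is applied first and the rule for the derivation $D_2$ second (and conversely in $D_1D_2(u\cdot v)$); writing out all eight resulting terms explicitly settles the matter. I do not expect any genuine obstacle here --- the argument is a formal Leibniz-rule manipulation that would work verbatim in any nonassociative algebra, so the pre-Jacobi-Jordan structure is in fact irrelevant to it.
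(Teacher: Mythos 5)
Your proposal is correct and follows essentially the same route as the paper: a direct expansion of $[D_1,D_2](u\cdot v)$ using the Leibniz-type rules (\ref{der1}) and (\ref{ader1}), with the cross terms $D_i(u)\cdot D_j(v)$ cancelling in the difference and the signs tracked case by case; the paper likewise never invokes the pre-Jacobi-Jordan identity (\ref{6l}). Your sign bookkeeping matches the paper's computations in all three cases, so there is nothing to add.
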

	\begin{proof}
		\begin{enumerate}
			\item  Let $ D_{1},D_{2}\in Der(A) $. For all $ u,v\in A $, we have\\\\
			$ \begin{array}{lll}
				[D_{1},D_{2}](u\cdot v)=(D_{1}\circ D_{2})(u\cdot v)-(D_{2}\circ D_{1})(u\cdot v)\\
				\overset{(\ref{der1})}{=}D_{1}(D_{2}(u))\cdot v+(D_{2}(u))\cdot(D_{1}(v))+(D_{1}(u))\cdot(D_{2}(v))+u\cdot D_{1}(D_{2}(v))-D_{2}(D_{1}(u))\cdot v\\
				-(D_{1}(u))\cdot(D_{2}(v))-(D_{2}(u))\cdot(D_{1}(v))-u\cdot D_{2}(D_{1}(v))\\
				=D_{1}(D_{2}(u))\cdot v-D_{2}(D_{1}(u))\cdot v+u\cdot D_{1}(D_{2}(v))-u\cdot D_{2}(D_{1}(v))\\
				=([D_{1},D_{2}](u))\cdot v+u\cdot([D_{1},D_{2}](v))
			\end{array} $.\\ 
			Hence, $[Der(A),Der(A)]\subset Der(A)$.
			\item Let $ D_{1},D_{2}\in ADer(A) $. We have, \\
			$ \begin{array}{lll}
				[D_{1},D_{2}](u\cdot v)=(D_{1}\circ D_{2})(u\cdot v)-(D_{2}\circ D_{1})(u\cdot v)\\
				\overset{(\ref{ader1})}{=}D_{1}(D_{2}(u))\cdot v+(D_{2}(u))\cdot(D_{1}(v))+(D_{1}(u))\cdot(D_{2}(v))+u\cdot D_{1}(D_{2}(v))-D_{2}(D_{1}(u))\cdot v\\
				-(D_{1}(u))\cdot(D_{2}(v))-(D_{2}(u))\cdot(D_{1}(v))-u\cdot D_{2}(D_{1}(v))\\
				=D_{1}(D_{2}(u))\cdot v-D_{2}(D_{1}(u))\cdot v+u\cdot D_{1}(D_{2}(v))-u\cdot D_{2}(D_{1}(v))\\
				=([D_{1},D_{2}](u))\cdot v+u\cdot([D_{1},D_{2}](v))	
			\end{array} $. \\
			Hence, $[ADer(A),ADer(A)]\subset Der(A)$.
			\item Let $ D_{1}\in ADer(A) $, $ D_{2}\in Der(A) $. We have, \\
			$ \begin{array}{lll}
				[D_{1},D_{2}](u\cdot v)=(D_{1}\circ D_{2})(u\cdot v)-(D_{2}\circ D_{1})(u\cdot v)\\
				\overset{(\ref{der1}),(\ref{ader1})}{=}-D_{1}(D_{2}(u))\cdot v-(D_{2}(u))\cdot(D_{1}(v))-(D_{1}(u))\cdot(D_{2}(v))-u\cdot D_{1}(D_{2}(v))+D_{2}(D_{1}(u))\cdot v\\
				+(D_{1}(u))\cdot(D_{2}(v))+(D_{2}(u))\cdot(D_{1}(v))+u\cdot D_{2}(D_{1}(v))\\
				=-D_{1}(D_{2}(u))\cdot v+-D_{2}(D_{1}(u))\cdot v-u\cdot(D_{1}(D_{2}(v)))+u\cdot(D_{2}(D_{1}(v)))\\
				=-([D_{1},D_{2}](u))\cdot v-u\cdot([D_{1},D_{2}](v))
			\end{array} $\\
			Hense, $[ADer(A),Der(A)]\subset ADer(A).$   
		\end{enumerate}
	\end{proof}
	It is easy to prove:
	\begin{proposition}
		$(Der(A),[,])$ is a Lie algebra.
	\end{proposition}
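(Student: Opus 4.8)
The plan is to realize $Der(A)$ as a Lie subalgebra of the general linear Lie algebra $\mathfrak{gl}(A)=\mathrm{End}(A)$ equipped with its usual commutator bracket $[X,Y]=X\circ Y-Y\circ X$. Recall that $(\mathrm{End}(A),[\,,\,])$ is a Lie algebra: the bracket is $\mathbb{K}$-bilinear and antisymmetric by inspection, and the Jacobi identity $[[X,Y],Z]+[[Y,Z],X]+[[Z,X],Y]=0$ holds for all $X,Y,Z\in\mathrm{End}(A)$ by the standard expansion of the nine compositions, which cancel in pairs. None of this uses the pre-Jacobi-Jordan structure.

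First I would verify that $Der(A)$ is a $\mathbb{K}$-linear subspace of $\mathrm{End}(A)$. Indeed $0\in Der(A)$, and if $D_1,D_2\in Der(A)$ and $\lambda,\mu\in\mathbb{K}$, then for all $u,v\in A$ one computes directly from \eqref{der1} that
\begin{eqnarray}
(\lambda D_1+\mu D_2)(u\cdot v)&=&\lambda\bigl(D_1(u)\cdot v+u\cdot D_1(v)\bigr)+\mu\bigl(D_2(u)\cdot v+u\cdot D_2(v)\bigr)\nonumber\\
&=&\bigl((\lambda D_1+\mu D_2)(u)\bigr)\cdot v+u\cdot\bigl((\lambda D_1+\mu D_2)(v)\bigr),\nonumber
\end{eqnarray}
so $\lambda D_1+\mu D_2\in Der(A)$. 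This is the only routine verification needed.

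Next I would invoke the preceding Lemma, part (1), which asserts $[Der(A),Der(A)]\subset Der(A)$; this is precisely the statement that $Der(A)$ is closed under the commutator bracket of $\mathrm{End}(A)$. Combining the two observations, $Der(A)$ is a subspace of $\mathrm{End}(A)$ stable under the bracket, and the bracket restricted to $Der(A)$ inherits bilinearity, antisymmetry, and the Jacobi identity from $(\mathrm{End}(A),[\,,\,])$. Hence $(Der(A),[\,,\,])$ is a Lie algebra.

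\textbf{Main obstacle.} There is essentially no obstacle: the content has already been extracted in the previous Lemma, and everything else is the general principle that a bracket-closed subspace of $\mathfrak{gl}(A)$ is a Lie algebra. If one wanted a fully self-contained argument one would simply spell out the three-term cancellation proving the Jacobi identity in $\mathrm{End}(A)$, but I would prefer to state it as the well-known fact that it is rather than reproduce the calculation.
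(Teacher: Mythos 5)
Your proof is correct and matches the paper's intent: the paper omits the proof (stating only that it is easy), relying implicitly on part (1) of the preceding Lemma exactly as you do, with the remaining content being the standard fact that a bracket-closed subspace of $\mathfrak{gl}(A)$ is a Lie subalgebra. Nothing further is needed.
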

	Now, let $D_1\in Der(A)\cup ADer(A)$ and $D_2\in Der(A)\cup ADer(A)$ define their anti-commutator by
	$$\{D_1,D_2\}:=D_1D_2+D_2D_1.$$
	In the following two propositions, we give a necessary and sufficient condition for the anti-commutator of two derivations( respectively antiderivations) to be an derivation( respectively antiderivation).
	\begin{proposition} \label{antantid}
		Let $(A,\cdot)$ be a pre-Jacobi-Jordan algebra.
		\begin{enumerate}
			\item  Let $D_1\in ADer(A)$ and $D_2\in ADer(A).$ Then,
			\begin{eqnarray}
				&&	\mbox{$\bullet$ $\{D_1,D_2\}\in ADer(A)$ if and only if}\nonumber\\ &&-2\bigg(\{D_{1},D_{2}\}(u)\bigg)\cdot v-2u\cdot\bigg(\{D_{1},D_{2}\}(v)\bigg)+2(D_{1}(u))\cdot(D_{2}(v))\nonumber\\
				&&+2(D_{2}(u))\cdot(D_{1}(v))=0,\label{antider}\\
				&&	\mbox{$\bullet$ $\{D_1,D_2\}\in Der(A)$ if and only if}\nonumber\\
				&& +2(D_{1}(u))\cdot(D_{2}(v))+2(D_{2}(u))\cdot(D_{1}(v))=0 \label{Der}.
			\end{eqnarray}
			\item Let $D_1\in Der(A)$ and $D_2\in Der(A).$ Then,\\
			$\bullet$ $\{D_1,D_2\}\in ADer(A)$ if and only if f (\ref{antider}) holds,\\
			$\bullet$ $\{D_1,D_2\}\in Der(A)$ if and only if (\ref{Der}) holds.\\
			\item Let $D_1\in ADer(A)$ and $D_2\in Der(A).$ Then,\\
			$\bullet$ $\{D_1,D_2\}\in ADer(A)$ if and only if f (\ref{Der}) holds,\\
			$\bullet$ $\{D_1,D_2\}\in Der(A)$ if and only if (\ref{antider}) holds.\\
		\end{enumerate}
	\end{proposition}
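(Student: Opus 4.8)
The plan is to extract all six equivalences from a single computation. Attach to each element of $Der(A)\cup ADer(A)$ a parity, $\bar 0$ for a derivation and $\bar 1$ for an antiderivation, so that a map $D$ of parity $\epsilon$ obeys $D(u\cdot v)=(-1)^{\epsilon}\big(D(u)\cdot v+u\cdot D(v)\big)$ for all $u,v\in A$; this only repackages (\ref{der1}) and (\ref{ader1}). First I would expand $\{D_1,D_2\}(u\cdot v)=D_1D_2(u\cdot v)+D_2D_1(u\cdot v)$ by applying this Leibniz-type rule twice, exactly as in the proof of the commutator identities above, the sole difference being that here the two ``second-order'' cross-terms reinforce instead of cancelling. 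Using only bilinearity of $\cdot$ together with the defining identities of $D_1$ and $D_2$ — and, notably, no pre-Jacobi-Jordan identity at all — one obtains the master identity
\begin{eqnarray}
	\{D_1,D_2\}(u\cdot v)&=&(-1)^{\epsilon_1+\epsilon_2}\Big(\{D_1,D_2\}(u)\cdot v+u\cdot\{D_1,D_2\}(v)\Big)\nonumber\\
	&&+\,2(-1)^{\epsilon_1+\epsilon_2}\Big(D_1(u)\cdot D_2(v)+D_2(u)\cdot D_1(v)\Big),\nonumber
\end{eqnarray}
valid for all $u,v\in A$.

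Next I would read each assertion off this identity. By definition, $\{D_1,D_2\}$ is a derivation (resp. an antiderivation) precisely when the left-hand side above equals $+$ (resp. $-$) the Leibniz expression $\{D_1,D_2\}(u)\cdot v+u\cdot\{D_1,D_2\}(v)$ for all $u,v$. When $\epsilon_1+\epsilon_2$ is even — cases (1) and (2) — asking that $\{D_1,D_2\}\in Der(A)$ forces the correction term to vanish, which is (\ref{Der}), while asking that $\{D_1,D_2\}\in ADer(A)$ yields (\ref{antider}); when $\epsilon_1+\epsilon_2$ is odd — case (3) — the overall sign flips and the two conclusions change places, so $\{D_1,D_2\}\in ADer(A)$ becomes equivalent to (\ref{Der}) and $\{D_1,D_2\}\in Der(A)$ to (\ref{antider}). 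Since the master identity is an equality holding for all $u,v$, every one of these ``if and only if'' statements holds in both directions with nothing further to check.

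The point that demands care — really the only obstacle, and it is bookkeeping rather than anything conceptual — is the sign juggling when an antiderivation is applied to an already-differentiated product, e.g. $D_1\big(-D_2(u)\cdot v\big)=D_1(D_2(u))\cdot v+D_2(u)\cdot D_1(v)$, followed by checking that, after summing $D_1D_2$ and $D_2D_1$, the cross-terms $D_1(u)\cdot D_2(v)$ and $D_2(u)\cdot D_1(v)$ collect with the common coefficient $2(-1)^{\epsilon_1+\epsilon_2}$ while the remaining terms assemble exactly into $(-1)^{\epsilon_1+\epsilon_2}\big(\{D_1,D_2\}(u)\cdot v+u\cdot\{D_1,D_2\}(v)\big)$. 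A reader who would rather not invoke the parity shorthand may run the expansion three separate times, once for each pairing $(ADer,ADer)$, $(Der,Der)$, $(ADer,Der)$, in the computational style already used in this section; the three outputs assemble verbatim into the proposition.
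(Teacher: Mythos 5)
Your proposal is correct and follows essentially the same route as the paper: expand $\{D_1,D_2\}(u\cdot v)=D_1D_2(u\cdot v)+D_2D_1(u\cdot v)$ by two applications of the Leibniz-type rules (\ref{der1})--(\ref{ader1}), observe that the cross terms $D_1(u)\cdot D_2(v)$ and $D_2(u)\cdot D_1(v)$ reinforce with coefficient $2$ instead of cancelling, and read each case off the resulting identity; your parity exponent $(-1)^{\epsilon_1+\epsilon_2}$ merely packages into one formula the three expansions that the paper carries out (case (1) explicitly, cases (2) and (3) ``similarly''), and no pre-Jacobi-Jordan identity is needed in either version. One caveat, which your argument shares with the paper's own conclusion rather than introducing: in cases (1) and (2) the condition your master identity actually yields for $\{D_1,D_2\}\in ADer(A)$ is $-2(\{D_{1},D_{2}\}(u))\cdot v-2u\cdot(\{D_{1},D_{2}\}(v))-2(D_{1}(u))\cdot(D_{2}(v))-2(D_{2}(u))\cdot(D_{1}(v))=0$, whose cross terms carry the opposite sign to the printed (\ref{antider}) (and likewise in case (3) for $\{D_1,D_2\}\in Der(A)$), so the discrepancy sits in the statement of (\ref{antider}), not in your derivation.
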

	\begin{proof}
		\begin{enumerate}
			\item 	Let $D_1,D_2\in ADer(A).$ Then, using (\ref{ader1}) for $D_1$ and $D_2$, we have for all \\ 
			$u,v\in A$,\\
			$\begin{array}{lll}
				\{D_1,D_2\}(u\cdot v)=D_{1}\bigg(D_{2}(u\cdot v)\bigg)+D_{2}\bigg(D_{1}(u\cdot v)\bigg)\\
				=D_{1}\bigg(-(D_{2}(u))\cdot v-u\cdot(D_{2}(v))\bigg)+D_{2}\bigg(-(D_{1}(u))\cdot v-u\cdot(D_{1}(v))\bigg)\\
				=-D_{1}\bigg((D_{2}(u))\cdot v\bigg)-D_{1}\bigg(u\cdot(D_{2}(v)) \bigg)-
				D_{2}\bigg((D_{1}(u))\cdot v\bigg)-D_{2}\bigg(u\cdot(D_{1}(v)) \bigg)\\
				=\bigg(D_{1}(D_{2}(u))\bigg)\cdot v+(D_{2}(u))\cdot(D_{1}(v))+(D_{1}(u))\cdot(D_{2}(v))+u\cdot\bigg(D_{1}(D_{2}(v))\bigg)
				\\+
				\bigg(D_{2}(D_{1}(u))\bigg)\cdot v+(D_{1}(u))\cdot(D_{2}(v))+(D_{2}(u))\cdot(D_{1}(v))+u\cdot\bigg(D_{2}(D_{1}(v))\bigg)\\
				=\bigg(D_{1}D_{2}(u)\bigg)\cdot v+\bigg(D_{2}D_{1}(u))\bigg)\cdot v+u\cdot\bigg(D_{1}D_{2}(v)\bigg)+u\cdot\bigg(D_{2}D_{1}(v)\bigg)\\
				+2(D_{1}(u))\cdot(D_{2}(v))+2(D_{2}(u))\cdot(D_{1}(v))\\
				=\bigg(\{D_{1},D_{2}\}(u)\bigg)\cdot v+u\cdot\bigg(\{D_{1},D_{2}\}(v)\bigg)+2(D_{1}(u))\cdot(D_{2}(v))+2(D_{2}(u))\cdot(D_{1}(v)).
			\end{array}  $\\
			Hence, $\{D_1,D_2\}\in ADer(A)$ (resp. $\{D_1,D_2\}\in Der(A)$ if and only if  (\ref{antider}) (resp. (\ref{Der}) ) holds.
			\item  Similarly, we prove (2) and (3).
		\end{enumerate}	
	\end{proof}
	\section{Cohomology of left pre-Jacobi-Jordan algebra}
	In this section, we introduce a cohomology theory for left pre-Jacobi-Jordan algebras fitting with deformation theory. First, we define the differential operators and then define the first and second cohomology groups in low degrees. Finally, interpretations of these groups are obtained.
	\subsection{Definition of cohomologies}
	Let $ (A,\cdot) $ be a left pre-Jacobi-Jordan algebra and $ (V;\rho,\mu) $ is its representation. Denote for all $ n\in \mathbb{N},n\geq 1 $, the set of $n$-linear maps from $A$ to $V$ by $\mathbf{C}^{n}(A,V)$ i.e.,  $\mathbf{C}^{n}(A,V):=\{f:A^{\otimes n}\rightarrow V, f \text{ is linear}\} $
	and $\mathbf{A}^{n}(A,V):=\{f\in Hom(\wedge^{n-1} A\otimes A, V), \circlearrowleft_{(u,v,w)} f(u\ast v,x_1,\cdots, x_{n-1},w\cdot x_n)=0, \forall (u,v,w, x_1,\cdots, x_n)\in A^{\otimes (n+3)}\}$.
	More precisely, $f\in \mathbf{A}^{n}(A,V)$ if only if $f\in Hom(\wedge^{n-1} A\otimes A, V)$ and 
	\begin{eqnarray}
		&& f(u\ast v, x_1,\cdots, x_{n-1}, w\cdot x_{n})+f(v\ast w, x_1,\cdots, x_{n-1}, u\cdot x_{n})\nonumber\\
		&&+f(w\ast u, x_1,\cdots, x_{n-1}, v\cdot x_{n})=0  \mbox{ $\forall (u,v,w, x_1,\cdots, x_n)\in A^{\otimes (n+3)}$.}\label{cA}
	\end{eqnarray}
	Next we define linear maps $  d^{n}: \mathbf{C}^{n}(A,V)\rightarrow \mathbf{C}^{n+1}(A,V) $ 
	and $\delta^n : \mathbf{A}^{n}(A,V)\rightarrow \mathbf{C}^{n+1}(A,V) $ by
	\begin{eqnarray}
		&& d^n f(x_1, \cdots, x_{n+1})\nonumber\\
		&&=\sum\limits_{i=1}^{n}\rho(x_i)f(x_1,\cdots,\widehat{x_i},\cdots, x_{n+1})
		+\sum\limits_{i=1}^{n}\mu(x_{n+1})f(x_1,\cdots,\widehat{x_i},\cdots, x_n,x_i)\nonumber\\
		&&+\sum\limits_{i=1}^{n}f(x_1,\cdots,\widehat{x_i},\cdots, x_n,x_i\cdot x_{n+1})
		+\sum\limits_{1\leq i< j\leq n} f(x_i\ast x_j, x_1, \cdots, \widehat{x_i}, \cdots, \widehat{x_j}, \cdots, x_{n+1}),
	\end{eqnarray}
	\begin{eqnarray}
		&& \delta^n f(x_1, \cdots, x_{n+1})\nonumber\\
		&&=\sum\limits_{i=1}^{n}\rho(x_i)f(x_1,\cdots,\widehat{x_i},\cdots, x_{n+1})
		+\sum\limits_{i=1}^{n}\mu(x_{n+1})f(x_1,\cdots,\widehat{x_i},\cdots, x_n,x_i)\nonumber\\
		&&-\sum\limits_{i=1}^{n}f(x_1,\cdots,\widehat{x_i},\cdots, x_n,x_i\cdot x_{n+1})
		-\sum\limits_{1\leq i< j\leq n} f(x_i\ast x_j, x_1, \cdots, \widehat{x_i}, \cdots, \widehat{x_j}, \cdots, x_{n+1}).
	\end{eqnarray}
	\begin{proposition}\label{Pro7}
		With the above notations,  for all $n\in\mathbb{N}$  with $n\geq 2,$ we have $d^n\circ \delta^{n-1}=0.$	
	\end{proposition}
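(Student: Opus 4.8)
The plan is to prove the identity by a direct expansion. Fix $f\in\mathbf{A}^{n-1}(A,V)$ and set $g:=\delta^{n-1}f$; since $g$ is an $n$-linear map, $d^n g\in\mathbf{C}^{n+1}(A,V)$ is defined, and the claim is that $d^n g(x_1,\dots,x_{n+1})=0$ identically. Substituting the four-sum formula for $g=\delta^{n-1}f$ into each of the four sums defining $d^n$ produces an alternating sum of $O(n^2)$ terms, and the proof amounts to partitioning these into groups that vanish separately. I would sort the terms by shape: (A) terms in which two of the operators $\rho,\mu$ act in succession on a value of $f$; (B) terms in which a single $\rho$ or $\mu$ acts on an $f$-value that already carries one internal product (some $x_i\cdot x_{n+1}$ or some $x_i\ast x_j$) among its arguments; and (C) terms in which $f$ is evaluated on two nested internal products.

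The group (A) terms come from the first two sums of $d^n$ composed with the first two sums of $g$, enriched by the operator extracted from the leading entry $x_i\ast x_j$ of the last sum of $d^n$ and from the product slot of its third sum. After using commutativity of $\ast$ and the antisymmetry of $f$ in its wedge slots to align the $f$-arguments, the coefficient of each fixed value of $f$ assembles into the left-hand side of one of the representation axioms (\ref{7}) or (\ref{8}), and hence is zero. Group (B) is a re-indexing argument: every term produced by an operator-sum of $d^n$ meeting a product-sum of $g$ is cancelled, with opposite sign, by a term produced by a product-sum of $d^n$ meeting an operator-sum of $g$; one only needs to track which index is hatted first and whether it lands in the distinguished last slot.

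Group (C) is the heart of the matter and splits into three families according to the two nested products. Terms of the form $f((x_i\ast x_j)\ast x_k,x_1,\dots)$, where both products are the sub-adjacent product sitting in the leading wedge slot, cancel after summing over cyclic permutations of $(i,j,k)$, by the Jacobi-Jordan identity (\ref{JJi}) for $\ast$ (Proposition \ref{Pro2}). Terms in which both nested products land in the distinguished last slot, of the shapes $f(\dots,(x_i\cdot x_j)\cdot x_{n+1})$ and $f(\dots,x_i\cdot(x_j\cdot x_{n+1}))$, cancel in fours by the left pre-Jacobi-Jordan identity (\ref{6l}). The genuinely mixed terms $f(x_i\ast x_j,x_1,\dots,x_k\cdot x_{n+1})$ — one product in the leading wedge slot, one in the last slot — must be collected into cyclic triples over $(i,j,k)$, so that the defining relation (\ref{cA}) of $\mathbf{A}^{n-1}(A,V)$ applies and kills them; this is precisely the place where membership of $f$ in $\mathbf{A}^{n-1}(A,V)$, rather than merely in $\mathbf{C}^{n-1}(A,V)$, is used.

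The main obstacle is organizational rather than conceptual: one must keep the $O(n^2)$ terms under control, and in group (C) make sure the distinguished variable $x_{n+1}$ never leaves the last slot of $f$, so that the argument triple fed into (\ref{cA}) is exactly $(x_i,x_j,x_k)$ with tail $x_{n+1}$; one must also be scrupulous about signs, since $\delta^{n-1}$ carries minus signs on its last two sums while $d^n$ carries plus signs throughout. Once the grouping is in place, each group dies by one of (\ref{7}), (\ref{8}), (\ref{6l}), (\ref{JJi}), (\ref{cA}), and the computation — which parallels the Jacobi-Jordan case of \cite{abak} — is finished.
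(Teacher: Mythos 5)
Your proposal is correct and follows essentially the same route as the paper: expand $d^n\circ\delta^{n-1}$ directly, regroup the terms, and kill each group by (\ref{7}), (\ref{8}), (\ref{6l}), the Jacobi identity and the defining condition (\ref{cA}) of $\mathbf{A}^{n-1}(A,V)$, with the remaining cross terms cancelling by re-indexing. The only family your partition does not explicitly name is $f(x_k\ast x_l,\,x_i\ast x_j,\dots)$, with two non-nested $\ast$-products occupying the first two wedge slots; these vanish by the skew-symmetry of $f$ in those slots together with the symmetry of the double sum, as the paper notes in passing.
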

	\begin{proof}
		Let $ n\in\mathbb{N}, \ n\geq 2,$ $ f\in \mathbf{A}^{n-1}(A,V)$ and $ x_{1},\cdots,x_{n+1}\in A $. We have \\\\
		$ \begin{array}{lll}
			&&
			d^{n}\delta^{n-1}f(x_{1},\cdots,x_{n+1})\\
			&&=\displaystyle\sum_{i=1}^{n}\rho(x_{i})\delta^{n-1}f(x_{1},\cdots,\widehat{x_{i}},\cdots,x_{n+1})
			+\displaystyle\sum_{i=1}^{n}\mu(x_{n+1})\delta^{n-1}f(x_{1},\cdots,\widehat{x_{i}},\cdots,x_{n},x_{i})\\
			&&+\displaystyle\sum_{i=1}^{n}\delta^{n-1}f(x_{1},\cdots,\widehat{x_{i}},\cdots,x_{n},x_{i}\cdot x_{n+1})
			+\displaystyle\sum_{1\leq i<j\leq n}\delta^{n-1}f(x_{i}\star x_{j},\cdots,\widehat{x_{i}},\cdots,\widehat{x_{j}},\cdots x_{n+1}).
		\end{array} $\\\\
		The first sum of the right hand side:\\
		$ \begin{array}{lll}
			&&
			\displaystyle\sum_{i=1}^{n}\rho(x_{i})\delta^{n-1}f(x_{1},\cdots,\widehat{x_{i}},\cdots,x_{n+1})\\	
			&&=\displaystyle\sum_{1\leq i<j\leq n}\bigg(\rho(x_{i})\circ \rho(x_{j})+\rho(x_{j})\circ \rho(x_{i})\bigg)f(x_{1},\cdots,\widehat{x_{i}},\cdots,\widehat{x_{j}},\cdots,x_{n+1})\\
			&&+\displaystyle\sum_{\substack{1\leq i,j\leq n\\ j\neq i }}\rho(x_{i})\circ\mu(x_{n+1})f(x_{1},\cdots,\widehat{x_{i}},\cdots,\widehat{x_{j}},\cdots,x_{n},x_{j})\\
			&&-\displaystyle\sum_{\substack{1\leq i,j\leq n\\ j\neq i }}\rho(x_{i})f(x_{1},\cdots,\widehat{x_{i}},\cdots,\widehat{x_{j}},\cdots,x_{n},x_{j}\cdot x_{n+1})\\
			&&-\displaystyle\sum_{\substack{1\leq i,j,k\leq n\\ j\neq i,k\neq i,j<k }}\rho(x_{i})f(x_{j}\ast x_{k},x_{1},\cdots,\widehat{x_{i}},\cdots,\widehat{x_{j}},\cdots,\widehat{x_{k}},\cdots , x_{n+1}).
		\end{array} $\\\\
		The second sum of the right hand side:\\
		$ \begin{array}{lll}
			&&	\displaystyle\sum_{i=1}^{n}\mu(x_{n+1})\delta^{n-1}f(x_{1},\cdots,\widehat{x_{i}},\cdots,x_{n},x_{i})\\
			&&	=\displaystyle\sum_{\substack{1\leq i,j\leq n\\ j\neq i }}\mu(x_{n+1})\circ \rho (x_{j})f(x_{1},\cdots,\widehat{x_{i}},\cdots,\widehat{x_{j}},\cdots,x_{i})\\
			&&+\displaystyle\sum_{\substack{1\leq i,j\leq n\\ j\neq i }}\mu(x_{n+1})\circ\mu(x_{i})f(x_{1},\cdots,\widehat{x_{i}},\cdots,\widehat{x_{j}},\cdots,x_{j})\\
			&&-\displaystyle\sum_{\substack{1\leq i<j\leq n\\ }}\mu(x_{n+1})f(x_{1},\cdots,\widehat{x_{i}},\cdots,\widehat{x_{j}},\cdots,,x_{i}\ast x_{j})\\
			&&-\displaystyle\sum_{\substack{1\leq i,j,k\leq n\\ j\neq i,k\neq i,j<k }}\mu(x_{n+1})f(x_{j}\ast x_{k},x_{1},\cdots,\widehat{x_{i}},\cdots,\widehat{x_{j}},\cdots,\widehat{x_{k}},\cdots , x_{i}).
		\end{array} $\\\\
		The third sum of the right hand side:\\
		$ \begin{array}{lll}
			&&\displaystyle\sum_{i=1}^{n}\delta^{n-1}f(x_{1},\cdots,\widehat{x_{i}},\cdots,x_{n},x_{i}\cdot x_{n+1}) \\
			&&=\displaystyle\sum_{\substack{1\leq i,j\leq n\\ j\neq i }}\rho (x_{j})f(x_{1},\cdots,\widehat{x_{i}},\cdots,\widehat{x_{j}},\cdots,x_{i}\cdot x_{n+1})\\
			&&+\displaystyle\sum_{\substack{1\leq i,j\leq n\\ j\neq i }}\mu(x_{i}\cdot x_{n+1})f(x_{1},\cdots,\widehat{x_{i}},\cdots,\widehat{x_{j}},\cdots,x_{n},x_{j})\\
			&&-\displaystyle\sum_{\substack{1\leq i<j\leq n\\ }}f(x_{1},\cdots,\widehat{x_{i}},\cdots,\widehat{x_{j}},\cdots,x_{n},x_{j}\cdot (x_{i}\cdot x_{n+1}))\\
			&&-\displaystyle\sum_{\substack{1\leq i<j\leq n\\ }}f(x_{1},\cdots,\widehat{x_{i}},\cdots,\widehat{x_{j}},\cdots,x_{n},x_{i}\cdot (x_{j}\cdot x_{n+1}))\\
			&&-\displaystyle\sum_{\substack{1\leq i,j,k\leq n\\ j\neq i,k\neq i,j<k }}f(x_{j}\ast x_{k},x_{1},\cdots,\widehat{x_{i}},\cdots,\widehat{x_{j}},\cdots,\widehat{x_{k}},\cdots , x_{n},x_{i}\cdot x_{n+1}).
		\end{array} $\\\\\\
		The first sum of the right hand side:\\
		$ \begin{array}{lll}
			&& \displaystyle\sum_{1\leq i<j\leq n}\delta^{n-1}f(x_{i}\ast x_{j},\cdots,\widehat{x_{i}},\cdots,\widehat{x_{j}},\cdots x_{n+1})\\			
			&&=\displaystyle\sum_{1\leq i<j\leq n}\rho (x_{i}\ast x_{j})f(x_{1},\cdots,\widehat{x_{i}},\cdots,\widehat{x_{j}},\cdots,x_{n+1})\\
			&&+\displaystyle\sum_{\substack{1\leq i,j,k\leq n\\ k\neq i,k\neq j,i<j }}\rho(x_{k})f(x_{i}\ast x_{j},x_{1},\cdots,\widehat{x_{i}},\cdots,\widehat{x_{j}},\cdots,\widehat{x_{k}},\cdots , x_{n}, x_{n+1})\\
			&&+\displaystyle\sum_{1\leq i<j\leq n}\mu (x_{n+1})f(x_{1},\cdots,\widehat{x_{i}},\cdots,\widehat{x_{j}},\cdots,x_{n},x_{i}\ast x_{j})\\
			&&+\displaystyle\sum_{\substack{1\leq i,j,k\leq n\\ k\neq i,k\neq j,i<j }}\mu(x_{n+1})f(x_{i}\ast x_{j},x_{1},\cdots,\widehat{x_{i}},\cdots,\widehat{x_{j}},\cdots,\widehat{x_{k}},\cdots , x_{n}, x_{k})\\
			&&-\displaystyle\sum_{1\leq i<j\leq n}f(x_{1},\cdots,\widehat{x_{i}},\cdots,\widehat{x_{j}},\cdots,x_{n},(x_{i}\ast x_{j})\cdot x_{n+1})\\
		\end{array} $\\
	$ \begin{array}{lll}
			&&-\displaystyle\sum_{\substack{1\leq i,j,k\leq n\\ k\neq i,k\neq j,i<j }}f(x_{i}\ast x_{j},x_{1},\cdots,\widehat{x_{i}},\cdots,\widehat{x_{j}},\cdots,\widehat{x_{k}},\cdots , x_{n}, x_{k}\cdot x_{n+1})\\
			&&-\displaystyle\sum_{\substack{1\leq i,j,k,l\leq n\\
					\{i,j\}\cap\{k,l\}=\emptyset\\ i<j,k<l
			}}f(x_{k}\ast x_{l},x_{i}\ast x_{j},x_{1},\cdots,\widehat{x_{i}},\cdots,\widehat{x_{j}},\cdots,\widehat{x_{k}},\cdots ,\widehat{x_{l}},\cdots, x_{n+1})\\
			&&-\displaystyle\sum_{\substack{1\leq i,j,k\leq n\\ k\neq i,k\neq j,i<j }}f((x_{i}\ast x_{j})\ast x_{k},x_{1},\cdots,\widehat{x_{i}},\cdots,\widehat{x_{j}},\cdots,\widehat{x_{k}},\cdots , x_{n+1})\\
			
			&&=\displaystyle\sum_{1\leq i<j\leq n}\rho (x_{i}\ast x_{j})f(x_{1},\cdots,\widehat{x_{i}},\cdots,\widehat{x_{j}},\cdots,x_{n+1})\\
			&&+\displaystyle\sum_{\substack{1\leq i,j,k\leq n\\ k\neq i,k\neq j,i<j }}\rho(x_{k})f(x_{i}\ast x_{j},x_{1},\cdots,\widehat{x_{i}},\cdots,\widehat{x_{j}},\cdots,\widehat{x_{k}},\cdots , x_{n}, x_{n+1})\\
			&&+\displaystyle\sum_{1\leq i<j\leq n}\mu (x_{n+1})f(x_{1},\cdots,\widehat{x_{i}},\cdots,\widehat{x_{j}},\cdots,x_{n},x_{i}\ast x_{j})\\
			&&+\displaystyle\sum_{\substack{1\leq i,j,k\leq n\\ k\neq i,k\neq j,i<j }}\mu(x_{n+1})f(x_{i}\ast x_{j},x_{1},\cdots,\widehat{x_{i}},\cdots,\widehat{x_{j}},\cdots,\widehat{x_{k}},\cdots , x_{n}, x_{k})\\
			&&-\displaystyle\sum_{1\leq i<j\leq n}f(x_{1},\cdots,\widehat{x_{i}},\cdots,\widehat{x_{j}},\cdots,x_{n},(x_{i}\ast x_{j})\cdot x_{n+1})\\
			&&-\displaystyle\sum_{\substack{1\leq i,j,k\leq n\\ k\neq i,k\neq j,i<j }}f(x_{i}\ast x_{j},x_{1},\cdots,\widehat{x_{i}},\cdots,\widehat{x_{j}},\cdots,\widehat{x_{k}},\cdots , x_{n}, x_{k}\cdot x_{n+1}),\\	
		\end{array} $\\\\
		since $f$ is skew-symmetric and by using Jacobi identity. Consequently,\\
		\\
		$	\begin{array}{lll}
			&& d^{n}\delta^{n-1}f(x_{1},\cdots,x_{n+1})\\
			&&=\displaystyle\sum_{1\leq i<j\leq n}\bigg(\rho(x_i\ast x_j)+\rho(x_{i})\circ \rho(x_{j})+\rho(x_{j})\circ \rho(x_{i})\bigg)f(x_{1},\cdots,\widehat{x_{i}},\cdots,\widehat{x_{j}},\cdots,x_{n+1})\\
			&&+\displaystyle\sum_{\substack{1\leq i,j\leq n\\ j\neq i }}\bigg(\mu(x_i\cdot x_{n+1})+\mu(x_{n+1})\mu(x_i)+\mu(x_{n+1})\rho(x_i)+\rho(x_{i})\mu(x_{n+1})\bigg)f(x_{1},\cdots,\\
			&&\widehat{x_{i}},\cdots,\widehat{x_{j}},\cdots,x_{n},x_{j})\\
			&&-\displaystyle\sum_{1\leq i<j\leq n}f\bigg(x_{1},\cdots,\widehat{x_{i}},\cdots,\widehat{x_{j}},\cdots,x_{n},(x_{i}\ast x_{j})\cdot x_{n+1}+x_i\cdot(x_j\cdot x_{n+1})+x_j\cdot(x_i\cdot x_{n+1})\bigg)\\
			&&-2\displaystyle\sum_{\substack{1\leq i,j,k\leq n\\ k\neq i,k\neq j,i<j }}f(x_{i}\ast x_{j},x_{1},\cdots,\widehat{x_{i}},\cdots,\widehat{x_{j}},\cdots,\widehat{x_{k}},\cdots , x_{n}, x_{k}\cdot x_{n+1})=0
		\end{array}	$\\\\
		by (\ref{7}), (\ref{8}), (\ref{6}) and (\ref{cA}).
	\end{proof}
	Consider linear maps $ d^0: \mathbf{C}^{0}(A,V)\rightarrow \mathbf{C}^{1}(A,V)$ and  $ d^0: \mathbf{A}^{0}(A,V)\rightarrow \mathbf{C}^{1}(A,V)$ by: \\
	$	\delta^{0}v(x)=d^{0}v(x):=\rho(x)v+\mu(x)v,  \forall (v,x)\in V\otimes A.$
	Then by straightforward computations, we have for all $v\in V$ and $(x,y)\in A^{\otimes 2}$:
	\begin{eqnarray}
		&&(d^{1}\circ \delta^{0}v)(x,y)=\rho(x)\delta^{0}v(y)+\mu(y)\delta^{0}v(x)+\delta^{0}v(x\cdot y)\nonumber\\
		&&=\rho(x)\circ\rho(y)v+\rho(x)\circ\mu(y)v+\mu(y)\circ\rho(x)v+\mu(y)\circ\mu(x)v+\rho(x\cdot y)v+\mu(x\cdot y)v\nonumber\\
		&&\overset{(\ref{8})}{=}\rho(x\cdot y)v+\rho(x)\circ\rho(y)v.\nonumber
	\end{eqnarray} 
	Hence, 
	\begin{eqnarray}
		(d^{1}\circ \delta^{0}v)(x,y)=0 \Longleftrightarrow \rho(x\cdot y)v+\rho(x)\circ\rho(y)v=0.
	\end{eqnarray}
	Therefore, we put $ \mathbf{A}^{0}(A,V)=\mathbf{C}^{0}(A,V):=\{v\in V,\rho(x\cdot y)v+\rho(x)\circ\rho(y)v=0 
	,\forall x,y\in A\}=V^{r.Aas}$.\\
	
	Associated to the representation $ (V;\rho,\mu) $, we obtain the complex $ (C(A,V),d,\delta) $. We denote the set of $k$-cocycles by $ \mathcal{Z}^{k}(A,V)$ and the set of  $k$-coboundaries by $ \mathcal{B}^{k}(A,V) $  where
	\begin{eqnarray}
		&& \mathcal{Z}^{k}(A,V)=Ker(d^{k}) \label{18}, \,\ k\in\mathbb{N},\\ 
		&& \mathcal{B}^{k}(A,V)=Im(\delta^{k-1}),\,\ k\in\mathbb{N}\setminus\{0\}\label{18'},\\
		&&\mathcal{B}^{0}(A,V)=0.\label{18"}
	\end{eqnarray}
	The $k$-th cohomology group $\mathcal{H}^{k}(A,V)$ is given by:
	\begin{eqnarray}
		\label{10}
		\mathcal{H}^{k}(A,V)=\mathcal{Z}^{k}(A,V)/\mathcal{B}^{k}(A,V) ,\,\,\ k\in\mathbb{N}.
	\end{eqnarray}
	\subsection{Low degree cohomology spaces}
	In this subsection, we give an interpretation of the low degree cohomology spaces in terms of
	algebraic properties of the pre-Jacobi-Jordan algebra. In degree zero we have:
	\begin{eqnarray}
		\mathcal{H}^{0}(A,V)&=&\{v\in C^0(A,V), d^0v=0\}\nonumber\\
		&=&\{v\in V,\rho(x)v+\mu(x)v =\rho(x\cdot y)v+\rho(x)\circ\rho(y)v=0, \forall x,y\in A\} \nonumber\\
		&=&V^{inv}\cap V^{r.Aas}.\nonumber
	\end{eqnarray}
	For example if $V=A$ is the adjoint representation of $(A,\cdot),$ we obtain
	\begin{eqnarray}
		\mathcal{H}^{0}(A,V)
		&=& \{v\in V,x\cdot v+v\cdot x =(x\cdot y)\cdot v+x\cdot(y\cdot v)=0, \forall x,y\in A\}\nonumber\\
		&=&A^{inv}\cap A^{r.Aas}.\nonumber
	\end{eqnarray}
	Now, let investigate the first group of cohomology. We have:
	\begin{eqnarray}
		\mathcal{H}^{1}(A,V)&=&\mathcal{Z}^{1}(A,V)/\mathcal{B}^{1}(A,V),
	\end{eqnarray}
	where
	\begin{eqnarray}
		\mathcal{Z}^{1}(A,V)&=&\{f\in C^1(A,V), f(x_1\cdot x_2)=-\rho(x_1)f(x_2)-\mu(x_2)f(x_1), \forall (x_1,x_2)\in A^{\otimes 2}\}\nonumber\\
		&=& ADer(A,V),\nonumber\\
		\mathcal{B}^{1}(A,V)&=& \{f\in C^1(A,V), \exists v\in V, f(x)=\rho(x)v+\mu(x)v, 
		\rho(x\cdot y)v+\rho(x)\rho(y)v=0,\nonumber\\ 
		&& \forall (x,y)\in A^{\otimes 2}\}\nonumber\\
		&=&IADer(A,V).\nonumber
	\end{eqnarray}
	Therefore $$\mathcal{H}^{1}(A,V)=OADer(A,V) \mbox{  (the space of outer-antiderivations of A with values in V).}$$
	A particular case is $V=A$ be the adjoint representation $(A,L, R),$ then
	\begin{eqnarray}
		\mathcal{H}^{1}(A,A)&=&\mathcal{Z}^{1}(A,A)/\mathcal{B}^{1}(A,A),
	\end{eqnarray}
	where
	\begin{eqnarray}
		\mathcal{Z}^{1}(A,A)&=&\{f\in C^1(A,A), f(x_1,x_2)=-x_1\cdot f(x_2)-f(x_1)\cdot x_2, \forall (x_1,x_2)\in A^{\otimes 2}\}\nonumber\\
		&=& ADer(A),\nonumber\\
		\mathcal{B}^{1}(A,A)&=& \{f\in C^1(A,A), \exists v\in A, f(x)=x\cdot v+v\cdot x, 
		(x\cdot y)\cdot v+x\cdot(y\cdot v)=0,\nonumber\\ 
		&& \forall (x,y)\in A^{\otimes 2}\}\nonumber\\
		&=&IADer(A).\nonumber
	\end{eqnarray}
	Hence, $$\mathcal{H}^{1}(A)=OADer(A) \mbox{  (the space of outer-antiderivations of A).}$$
Another very interesting case is $V=\mathbb{K}$ with the trivial action. Here
$$\mathcal{H}^1(A,\mathbb{K})=\{f\in Hom(A,\mathbb{K}), f(x\cdot y)=0,\ \forall x,y\in A\}/\{0\}=(A/A^2)^*.$$
\begin{example}\label{ex1} (Computation of $\mathcal{H}^1(\mathcal{A}_1,\mathcal{A}_1)$). 
Here, we investigate the first group of cohomology
of $\mathcal{A}_1$ with the adjoint representation. We find the dimension and generators of $\mathcal{H}^1(\mathcal{A}_1,\mathcal{A}_1).$

A linear map $D$ on the previous algebra $\mathcal{A}_1$ is an antiderivation if and only if the matrix of $D$ on the basis $\{e_1, e_2\}$ is given by
$$\left(
\begin{array}{cc}
	a_1&0\\
	a_2&-2a_1
\end{array}
\right),$$ where, $a_1,\ a_2$ are scalars.\\

Observe that any $w=ae_1+be_2 \in \mathcal{A}_1$ where $a$ and $b$ are scalars, satisfies (\ref{id1}). Hence, the matrix of the inner antiderivation $D_w:= R_w+L_w:\mathcal{A}_1\rightarrow \mathcal{A}_1$ defined by $D_w (u):= u\cdot w+w\cdot u;$ $\forall u\in \mathcal{A}_1$ has the following form
$$\left(
\begin{array}{cc}
	0&0\\
	2b&0
\end{array}
\right).$$
 Thus, the dimension of $\mathcal{H}^1(\mathcal{A}_1,\mathcal{A}_1)$ is $1$ and elements of $\mathcal{H}^1(\mathcal{A}_1,\mathcal{A}_1)$
can be written as the classes of the following matrices:
$$\left(
\begin{array}{cc}
	a_1&0\\
	0&-2a_1
\end{array}
\right).$$
\end{example}
\begin{example}(Computation of $\mathcal{H}^1(\mathcal{A}_2,\mathbb{K})$). 
	Observe that $\varphi$ belongs to $\mathcal{H}^1(\mathcal{A}_2,\mathbb{K})=\{f\in Hom(A,\mathbb{K}), f(x\cdot y)=0,\ \forall x,y\in \mathcal{A}_2\}$ if and only if the matrix of $\varphi$ with respect to the basis $\{e_1, e_2, e_3, e_4 \}$ is of the form  $(a\,\ 0\,\ b\,\ 0),\,\ a,b\in \mathbb{K}$. Thus, $\mathcal{H}^1(\mathcal{A}_2,\mathbb{K})$ is spanned by $\{\varphi_1:=(1\,\ 0\,\ 0\,\ 0); \varphi_2:=(0\,\ 0\,\ 1\,\ 0)\}.$
\end{example}
\begin{example} (Computation of $\mathcal{H}^1(\mathcal{A}_2,\mathcal{A}_2)$). 
	Similarly as Example \ref{ex1}, we investigate the first group of cohomology
	of $\mathcal{A}_2$ with the adjoint representation. We find the dimension and generators of $\mathcal{H}^1(\mathcal{A}_2,\mathcal{A}_2).$
	
	A linear map $D$ on the previous algebra $\mathcal{A}_1$ is an antiderivation if and only if the matrix of $D$ on the basis $\{e_1, e_2, e_3,e_4\}$ is given by
	$$\left(
	\begin{array}{cccc}
		a_1&0&0&0\\
		a_2&-2a_1&c_2&0\\
		a_3&0&c_3&0\\
		a_4&-2a_3&c_4&-a_1-c_3
	\end{array}
	\right),$$ where, $a_1,\ a_2,\ a_3,\ a_4,\ c_2,\ c_3,\ c_4$ are scalars.\\
	
	Observe that any $w=ae_1+be_2+ce_3+de_4 \in \mathcal{A}_2$ where $a,\ b,\ c,\ d$  are scalars, satisfies (\ref{id1}). Hence, the matrix of the inner antiderivation $D_w:= R_w+L_w:\mathcal{A}_2\rightarrow \mathcal{A}_2$ defined by $D_w (u):= u\cdot w+w\cdot u;$ $\forall u\in \mathcal{A}_2$ has the following form
	$$\left(
	\begin{array}{cccc}
		0&0&0&0\\
		a&0&0&0\\
		0&0&0&0\\
		c&0&a&0
	\end{array}
	\right)$$
	where $a,\ c$ are scalars. Thus, the dimension of $\mathcal{H}^1(\mathcal{A}_2,\mathcal{A}_2)$ is $5$ and elements of $\mathcal{H}^1(\mathcal{A}_2,\mathcal{A}_2)$
	can be written as the classes of the following matrices:
	$$\left(
	\begin{array}{cccc}
		a_1&0&0&0\\
		a_2&-2a_1&c_2&0\\
		a_3&0&c_3&0\\
		0&-2a_3&0&-a_1-c_3
	\end{array}
	\right).$$ 
\end{example}
	\section{Linear deformations of pre-Jacobi-Jordan algebras }
	In this section, we study  a deformation theory for pre-Jacobi-Jordan algebras and show that the cohomology introduced in the previous section fits with formal deformations of left pre-Jacobi-Jordan algebras.
	\begin{definition}\label{D11}
		Let $(A,\cdot) $ be a left pre-Jacobi-Jordan algebra and $ \omega : A\times A\to A $ be a linear map. If for any $ t\in \mathbb{C} $, the multiplication $ \cdot_{t} $ defined by 
		\begin{eqnarray}
			\label{23}
			x\cdot_{t}y=x\cdot y+t\omega(x,y),\forall x,y\in A,
		\end{eqnarray}
		also gives a left pre-Jacobi-Jordan algebra structure, we say that $ \omega $ generates a linear deformation of the left pre-Jacobi-Jordan algebra $ (A,\cdot) $.	
	\end{definition}
	Since  is  $ x\cdot_{t}y=x\cdot y+t\omega(x,y)$ for all $ t\in \mathbb{C} $, we deduce that $ \omega $ generates a lineair deformation of the left pre-Jacobi-Jordan algebra $ (A,\cdot) $ if and only if for any $ x,y\in A $,
	\begin{eqnarray}
		\label{24}
		\begin{array}{lll}
			\omega(x,y)\cdot z+\omega(y,x)\cdot z
			+\omega(x\cdot y,z)+\omega(y\cdot x,z)+\omega(x,y\cdot z)+\omega(y,x\cdot z)\\
			x\cdot \omega(y,z)+y\cdot \omega(x,z)=0,
		\end{array}
	\end{eqnarray}
	\begin{eqnarray}
		\label{25}
		\omega(\omega(x,y),z)+\omega(x,\omega(y,z))+\omega(\omega(y,x),z)+\omega(y,\omega(x,z))=0.
	\end{eqnarray}
	Note that Eq.(\ref{24}) means that $ \omega $ is a $ 2 $-cocycle of the regular representation of left pre-Jacobi-Jordan algebra $ (A,\cdot) $ and Eq. (\ref{25}) means that $ (A,\omega) $ is a left pre-Jacobi-Jordan algebra. \\
	
	Hence, we have the following proposition.
	\begin{proposition}\label{an1}
	Let $ (A,\mu) $ be a left pre-Jacobi-Jordan algebra and $\mu_t:=\mu +t\omega$ be a linear deformation of $A$. Then 
	$$\omega\in\mathcal{Z}^2(A,A).$$
	\end{proposition}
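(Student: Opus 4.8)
The plan is straightforward: the statement to prove is that if $\mu_t := \mu + t\omega$ is a linear deformation of the left pre-Jacobi-Jordan algebra $(A,\mu)$, then $\omega$ is a $2$-cocycle for the regular representation $(A,L,R)$, i.e., $\omega \in \mathcal{Z}^2(A,A) = \ker(d^2)$. Since this is essentially a direct consequence of the discussion immediately preceding the statement, the proof amounts to identifying the coefficient of $t^1$ in the left pre-Jacobi-Jordan identity for $\cdot_t$ with the expression $d^2\omega$.

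First I would substitute $x \cdot_t y = x\cdot y + t\omega(x,y)$ into the defining identity \eqref{6l} for a left pre-Jacobi-Jordan algebra, namely $(x\cdot_t y)\cdot_t z + x\cdot_t(y\cdot_t z) + (y\cdot_t x)\cdot_t z + y\cdot_t(x\cdot_t z) = 0$, and expand as a polynomial in $t$. The degree-zero term vanishes because $(A,\cdot)$ is itself a left pre-Jacobi-Jordan algebra. The coefficient of $t$ is exactly equation \eqref{24}, i.e.,
\begin{eqnarray}
&& \omega(x,y)\cdot z + \omega(y,x)\cdot z + \omega(x\cdot y, z) + \omega(y\cdot x, z) + \omega(x, y\cdot z) + \omega(y, x\cdot z) \nonumber\\
&& + x\cdot\omega(y,z) + y\cdot\omega(x,z) = 0. \nonumber
\end{eqnarray}
The coefficient of $t^2$ is \eqref{25}, which says $(A,\omega)$ is a left pre-Jacobi-Jordan algebra; this part is not needed for the present proposition. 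So the only real content is to check that the $t$-linear condition above coincides with $d^2\omega = 0$.

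Next I would write out $d^2\omega(x_1,x_2,x_3)$ from the general formula for $d^n$ with $n=2$, $\rho = L$, $\mu = R$ (so $\rho(x)v = x\cdot v$ and $\mu(x)v = v\cdot x$), and $x_i \ast x_j = x_i\cdot x_j + x_j\cdot x_i$. This gives
\begin{eqnarray}
d^2\omega(x_1,x_2,x_3) &=& x_1\cdot\omega(x_2,x_3) + x_2\cdot\omega(x_1,x_3) + \omega(x_2,x_1)\cdot x_3 + \omega(x_1,x_2)\cdot x_3 \nonumber\\
&& + \omega(x_2, x_1\cdot x_3) + \omega(x_1, x_2\cdot x_3) + \omega(x_1\ast x_2, x_3). \nonumber
\end{eqnarray}
Since $\omega(x_1\ast x_2, x_3) = \omega(x_1\cdot x_2, x_3) + \omega(x_2\cdot x_1, x_3)$, term-by-term comparison shows this is exactly the left-hand side of the $t$-linear identity, with $(x,y,z) = (x_1,x_2,x_3)$. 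Hence $d^2\omega = 0$, so $\omega \in \ker(d^2) = \mathcal{Z}^2(A,A)$, and I would conclude by invoking \eqref{18} for $k=2$.

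There is no genuine obstacle here; the only thing requiring care is bookkeeping — making sure the two sums $\sum_i \rho(x_i)f(\dots)$ and $\sum_i \mu(x_{n+1})f(\dots)$ and $\sum_i f(\dots, x_i\cdot x_{n+1})$ and $\sum_{i<j} f(x_i\ast x_j, \dots)$ in the $n=2$ case produce precisely the eight terms of \eqref{24} and no extras, which is a finite and mechanical check. Thus the proof is essentially one display expanding $d^2\omega$ and matching it against \eqref{24}.
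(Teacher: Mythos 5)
Your proposal is correct and follows essentially the same route as the paper: the paper derives equations \eqref{24} and \eqref{25} by expanding the deformed product in powers of $t$ and then simply observes that \eqref{24} is the statement $d^2\omega=0$ for the regular representation, which is exactly the term-by-term identification you carry out (and your expansion of $d^2\omega$ at $n=2$ does match the eight terms of \eqref{24}). The only difference is that you make explicit the bookkeeping the paper leaves implicit.
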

	\begin{definition}\label{D14} Let $ (A,\cdot) $ be a left pre-Jacobi-Jordan algebra and $ N : A\to A$ be a linear map.\\
		Two deformations $ A_{t}=(A,\cdot_{t}) $ and $ A'_{t}=(A,\cdot'_{t}) $ of the pre-Jacobi-Jordan algebra $ (A,\cdot) $ generated respectively by $ \omega $ and $ \omega' $, are said  equivalent if there exists a family of left pre-Jacobi-Jordan algebra  morphisms $ Id_{A}+tN : A_{t}\to A'_{t} $ with $t\in\mathbb{C}$. A deformation is said trivial if there exists a family of pre-Jacobi-Jordan algebra  morphisms $ Id_{A}+tN : A_{t}\to A $ 
	\end{definition}
	Let $ Id_{A}+tN : A_{t}\to A'_{t} $ be a left pre-Jacobi-Jordan algebra  morphism. Then, we have :\\
	$
	(Id_{A}+tN )(x\cdot_{t} y)=(x+tN(x))\cdot'_{t}(y+tN(y)),\forall x,y\in A  $. Hence,\\
	$ \begin{array}{lll}
		\bigg(x\cdot N(y)+N(x)\cdot y+\omega'(x,y)-\omega(x,y)-N(x\cdot y)\bigg)t\\
		+\bigg(N(x)\cdot N(y)+\omega'(x,N(y))+\omega'(N(x),y)-N(\omega(x,y))\bigg)t^{2}+
		\omega'(N(x),N(y))t^{3}=0.
	\end{array} $.\\ 
	It follows that
	\begin{eqnarray}
		\label{49}
		\omega'(x,y)-\omega(x,y)=-N(x)\cdot y-x\cdot N(y)+N(x\cdot y),\\
		\label{50}
		N(\omega(x,y))=N(x)\cdot N(y)+\omega'(N(x),y)+\omega'(x,N(y)),\\
		\label{56}
		\omega'(N(x),N(y))=0.
	\end{eqnarray}
	Eq.(\ref{49}) means that $ \omega-\omega'=\delta N $ and Eq.(\ref{56}) means that $ \omega'_{/Im(N)}=0$.\\\\
	Thus, we have the following result:
	\begin{proposition}\label{an2}
		If two deformations  $A_{t}=(A,\cdot_{t}) $ and $ A'_{t}=(A,\cdot'_{t}) $ of the pre-Jacobi-Jordan algebra $ (A,\cdot) $, generated by $ \omega $ and $ \omega' $, respectively, are equivalent, then $ \omega $ and $ \omega' $ are cohomologous i.e., are in the same cohomology class of $ \mathcal{H}^{2}(A,A) $.  	
	\end{proposition}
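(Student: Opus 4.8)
The plan is to read the claim off the computation already carried out immediately before the statement. First I would record, using Proposition~\ref{an1}, that each of $\omega$ and $\omega'$ lies in $\mathcal{Z}^2(A,A)$, so both determine classes in $\mathcal{H}^2(A,A)=\mathcal{Z}^2(A,A)/\mathcal{B}^2(A,A)$; hence it suffices to prove that $\omega-\omega'$ belongs to $\mathcal{B}^2(A,A)=\mathrm{Im}(\delta^1)$.

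Next I would invoke the hypothesis of equivalence: by Definition~\ref{D14} there is a family $Id_A+tN:A_t\to A'_t$ of pre-Jacobi-Jordan algebra morphisms, and expanding $(Id_A+tN)(x\cdot_t y)=(x+tN(x))\cdot'_t(y+tN(y))$ in powers of $t$ produces exactly Eqs.~(\ref{49}), (\ref{50}), (\ref{56}). The one I need is Eq.~(\ref{49}), namely $\omega'(x,y)-\omega(x,y)=-N(x)\cdot y-x\cdot N(y)+N(x\cdot y)$ for all $x,y\in A$. The key step is then to recognise the right-hand side as a coboundary: for the adjoint representation $(A,L,R)$ one has $\rho=L$ and $\mu=R$, so that
\[
\delta^1 N(x,y)=L(x)N(y)+R(y)N(x)-N(x\cdot y)=x\cdot N(y)+N(x)\cdot y-N(x\cdot y),
\]
and therefore Eq.~(\ref{49}) says precisely $\omega-\omega'=\delta^1 N\in\mathrm{Im}(\delta^1)=\mathcal{B}^2(A,A)$. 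Combined with the first step this yields $[\omega]=[\omega']$ in $\mathcal{H}^2(A,A)$, i.e. $\omega$ and $\omega'$ are cohomologous.

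I do not expect a real obstacle here: the substantive computation (the derivation of (\ref{49})–(\ref{56})) is already in place before the statement, so the ``hard part'' reduces to matching (\ref{49}) against the definition of $\delta^1$ on the adjoint representation. The only bookkeeping point worth a line is to confirm that $N$, as a $1$-cochain, lies in the domain $\mathbf{A}^1(A,A)$ on which $\delta^1$ is defined, so that $\delta^1 N$ is a legitimate element of $\mathcal{B}^2(A,A)$; this follows from the conventions fixed in Section~3. Note that Eqs.~(\ref{50}) and (\ref{56}) play no role in this proposition — they instead constrain $\omega'$ and become relevant only for the trivial-deformation and Nijenhuis-operator discussion.
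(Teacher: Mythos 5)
Your proposal is correct and follows essentially the same route as the paper: the authors likewise expand $(Id_A+tN)(x\cdot_t y)=(x+tN(x))\cdot'_t(y+tN(y))$ in powers of $t$ to obtain Eq.~(\ref{49}), read it as $\omega-\omega'=\delta^1 N\in\mathcal{B}^2(A,A)$ for the regular representation, and conclude that $[\omega]=[\omega']$ in $\mathcal{H}^{2}(A,A)$. Your extra remark about checking that $N$ lies in the domain of $\delta^1$ is a reasonable point of care that the paper passes over silently.
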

By the previous proposition, we observe that the  term $\omega $ of a linear deformation depends only on its cohomology class with respect
to zigzag cohomology of a pre-Jacobi-Jordan algebra.\\

Thanks to Proposition \ref{an1} and Proposition \ref{an2}, it is easy to prove:
\begin{proposition}
Let $ (A,\mu) $ be a left pre-Jacobi-Jordan algebra. There is, over $K[[t]]/t^2$ , a one-to-one
correspondence between elements $\omega$ of $\mathcal{H}^2(A,A)$ and linear deformations of $A$ defined by 
$$\mu_t:=\mu +t\omega.$$
\end{proposition}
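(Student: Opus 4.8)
The plan is to assemble the desired bijection directly from the two preceding propositions, working over the truncated ring $K[[t]]/t^2$ so that all the cubic and higher terms that appeared in Definition \ref{D14} vanish identically. First I would fix the interpretation of ``linear deformation over $K[[t]]/t^2$'': a deformation is a product $\mu_t=\mu+t\omega$ with $\omega\in C^2(A,A)$ such that $\mu_t$ satisfies the left pre-Jacobi-Jordan identity modulo $t^2$. Expanding the identity \eqref{6l} for $\mu_t$ and collecting powers of $t$, the $t^0$ term vanishes because $(A,\mu)$ is already a pre-Jacobi-Jordan algebra, the $t^1$ term is precisely \eqref{24}, and the $t^2$ term is \eqref{25}; but over $K[[t]]/t^2$ the $t^2$ term is killed, so the only surviving condition is \eqref{24}, i.e. $\omega\in\mathcal{Z}^2(A,A)$. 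This is the content of Proposition \ref{an1} restricted to the truncated setting, and it already gives a surjection from $\mathcal{Z}^2(A,A)$ onto the set of linear deformations.

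Next I would analyze when two such deformations $\mu_t=\mu+t\omega$ and $\mu_t'=\mu+t\omega'$ are equivalent. By Definition \ref{D14}, equivalence is witnessed by a morphism $Id_A+tN$, and the computation preceding Proposition \ref{an2} shows this forces \eqref{49}, \eqref{50}, and \eqref{56}. Again, over $K[[t]]/t^2$ the $t^2$-equation \eqref{50} and the $t^3$-equation \eqref{56} drop out, leaving only \eqref{49}, namely $\omega'-\omega=\delta N$ for some $N\in C^1(A,A)$. Conversely, given $\omega,\omega'\in\mathcal{Z}^2(A,A)$ with $\omega'-\omega=\delta^1 N$, the same $Id_A+tN$ is a morphism $\mu_t\to\mu_t'$ modulo $t^2$ since the obstructing higher-order terms are zero. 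Hence, over $K[[t]]/t^2$, equivalence classes of linear deformations correspond bijectively to $\mathcal{Z}^2(A,A)/\mathrm{Im}(\delta^1)=\mathcal{Z}^2(A,A)/\mathcal{B}^2(A,A)=\mathcal{H}^2(A,A)$, using $\mathcal{B}^2(A,A)=\mathrm{Im}(\delta^1)$ from \eqref{18'}.

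To finish, I would simply observe that the assignment $\omega\mapsto\mu_t=\mu+t\omega$ descends to a well-defined map from $\mathcal{H}^2(A,A)$ to equivalence classes of linear deformations, that it is surjective by the discussion of Proposition \ref{an1}, and that it is injective by Proposition \ref{an2} together with the converse just noted. I expect the only genuinely delicate point to be bookkeeping: making sure that ``one-to-one correspondence'' is read as a correspondence between cohomology classes and equivalence classes of deformations (not raw cocycles and raw deformations), and being careful that the converse to Proposition \ref{an2}—that a coboundary difference actually produces an equivalence modulo $t^2$—is invoked, since the excerpt only states the forward direction explicitly. Everything else is a direct citation of Propositions \ref{an1} and \ref{an2}, which is why the authors call it ``easy to prove.''
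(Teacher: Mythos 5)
Your proposal is correct and follows exactly the route the paper intends: the paper offers no written proof beyond asserting that the statement follows from Propositions \ref{an1} and \ref{an2}, and your argument is precisely that deduction carried out in detail over $K[[t]]/t^2$. You even supply the one point the paper leaves implicit --- the converse of Proposition \ref{an2}, i.e.\ that a coboundary difference $\omega'-\omega=\delta^1 N$ actually yields an equivalence $Id_A+tN$ once the $t^2$- and $t^3$-obstructions are truncated away --- so your write-up is, if anything, more complete than the paper's.
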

	\section{Nijenhuis Operators on left Pre-Jacobi-Jordan}
	In this section, we introduce the notion of Nijenhuis operators on pre-Jacobi-Jordan algebras. Among other results, we show that a Nijenhuis operator  generates a trivial deformation of a given pre-Jacobi-Jordan 
	algebra.\\
	\\
	Let  consider a trivial deformation of a left pre-Jacobi-Jordan algebra $ (A,\cdot) $.\\\\
	Eqs. (\ref{49})-(\ref{56}) reduce to 
	\begin{eqnarray}
		\label{51}
		\omega(x,y)=N(x)\cdot y+x\cdot N(y)-N(x\cdot y),\\
		\label{52}
		N(\omega(x,y))=N(x)\cdot N(y).
	\end{eqnarray}
	Hence, Eqs.(\ref{51}) and (\ref{52}) give a meaning to the following definition.
	\begin{definition}\label{D13}
		Let $ (A,\cdot) $ be a left pre-Jacobi-Jordan algebra. A linear map $ N : A\to A $ is called a Nijenhuis operator if 
		\begin{eqnarray}
			\label{47}
			N(x)\cdot N(y)=N( x\cdot_N y),&\forall x,y\in A,
		\end{eqnarray}
		$$ where\,\,\,\
		x\cdot_N y:=N(x)\cdot y+x\cdot N(y)-N(x\cdot y)\,\ \forall x,y\in A.$$
	\end{definition}
	\begin{remark}
		Note that a Rota-Baxter operator of weight $-1$ on left pre-Jacobi-Jordan algebra $ A $ is exactly a Nijenhuis operator.
	\end{remark}
\begin{proposition}\label{pJJN}
	Let $ N : A\to A $ be a Nijenhuis operator on a left pre-Jacobi-Jordan algebra $ (A,\cdot) $. Then , $A_N:=(A,\cdot_{N})$ is left pre-Jacobi-Jordan algebra and  $ N $ is a morphism from  $ A_N $ to the initial left pre-Jacobi-Jordan algebra $ (A,\cdot) $.  	
\end{proposition}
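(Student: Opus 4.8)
The plan is to verify the two defining conditions of a left pre-Jacobi-Jordan algebra for $(A,\cdot_N)$ and then to check that $N$ intertwines the products $\cdot_N$ and $\cdot$. The last assertion is in fact immediate: it is precisely the content of the Nijenhuis identity $N(x)\cdot N(y)=N(x\cdot_N y)$ in Definition \ref{D13}, so $N:A_N\to(A,\cdot)$ is an algebra morphism as soon as $(A,\cdot_N)$ is shown to be a left pre-Jacobi-Jordan algebra. Hence the whole proof reduces to establishing that $\cdot_N$ satisfies the left-skew-symmetric identity (\ref{6l}), i.e.
\begin{eqnarray}
(x\cdot_N y)\cdot_N z+x\cdot_N(y\cdot_N z)+(y\cdot_N x)\cdot_N z+y\cdot_N(x\cdot_N z)=0,\qquad \forall x,y,z\in A.\nonumber
\end{eqnarray}

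First I would unfold one factor at a time. Since $N$ is Nijenhuis, for any $a,b\in A$ one has $N(a)\cdot N(b)=N(a\cdot_N b)$, and more usefully $a\cdot_N b = N(a)\cdot b + a\cdot N(b) - N(a\cdot b)$. Applying this I would expand, say, $(x\cdot_N y)\cdot_N z$ by first writing $p:=x\cdot_N y$ and then $p\cdot_N z = N(p)\cdot z + p\cdot N(z) - N(p\cdot z)$, where $N(p)=N(x\cdot_N y)=N(x)\cdot N(y)$ by (\ref{47}). This turns every term of the left-hand side into an expression involving only the original product $\cdot$, the operator $N$, and the elements $x,y,z,N(x),N(y),N(z)$. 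The strategy is then to group the resulting terms into three bundles according to how many times $N$ has been "absorbed": a bundle with $N$ applied to one slot of a bracket of original elements, a bundle with $N$ on two slots, and a bundle that is $N$ applied to a whole expression.

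The key observation is that within each bundle the terms organize themselves into quadruples of the shape $(a\cdot b)\cdot c + a\cdot(b\cdot c) + (b\cdot a)\cdot c + b\cdot(a\cdot c)$, which vanishes by (\ref{6l}) for $(A,\cdot)$; the bundle containing an outermost $N$ vanishes because $N$ is applied to such a quadruple, again zero by (\ref{6l}). Concretely: the "two $N$'s" bundle will assemble into $\bigl[(N(x)\cdot N(y))\cdot z + N(x)\cdot(N(y)\cdot z) + (N(y)\cdot N(x))\cdot z + N(y)\cdot(N(x)\cdot z)\bigr]$ plus the analogous quadruples obtained by moving the $N$'s onto other arguments, each $=0$; the "one $N$" bundle splits as a sum over which argument carries $N$, each summand a quadruple in $\cdot$ that is $0$; and the remaining terms collect as $-N\bigl(\text{quadruple in }\cdot\bigr)=0$. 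So after the dust settles the whole sum is identically zero.

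The main obstacle will be purely bookkeeping: expanding $(x\cdot_N y)\cdot_N z$ etc. produces on the order of a few dozen terms, and one must track signs and the position of $N$ carefully to see the cancellation into quadruples. I would handle this by introducing shorthand — e.g. writing $X=N(x)$, $Y=N(y)$, $Z=N(z)$ — and by exploiting the symmetry of the target identity under $x\leftrightarrow y$ (the left-hand side is manifestly invariant under swapping $x$ and $y$), which halves the cases to inspect. No genuinely new idea is needed beyond the Nijenhuis relation (\ref{47}) and identity (\ref{6l}); the content is the verification that $\cdot_N$ is closed under the pre-Jacobi-Jordan axiom, after which the morphism property is a one-line consequence of (\ref{47}).
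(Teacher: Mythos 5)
Your proposal is correct and follows essentially the same route as the paper: expand each $\cdot_N$ via $a\cdot_N b=N(a)\cdot b+a\cdot N(b)-N(a\cdot b)$, use the Nijenhuis relation (\ref{47}) to convert products of $N$-images, and regroup everything into anti-associator quadruples for the original product (with $N$'s distributed over the arguments, some wrapped in an outer $N$ or $N^2$), each of which vanishes by (\ref{6l}). The only cosmetic difference is that in the actual expansion the ``one $N$'' quadruples all sit inside an outer $-N(\cdot)$ rather than standing alone, which does not affect the cancellation; the morphism claim is, as you say, just (\ref{47}).
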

\begin{proof}
Pick $ x,y,z\in A $ and denote by $Aasso_N$ the anti-associator induced by $\cdot_N,$ then we have\\
$ \begin{array}{lll}
	Aasso_{N}(x,y,z)+Aasso(y,x,z)\\
	\overset{(\ref{1})}{=}(x\cdot_{N} y)\cdot_{N} z+(y\cdot_{N} x)\cdot_{N} z+x\cdot_{N} (y\cdot_{N} z)+y\cdot_{N} (x\cdot_{N} z)\\
	\overset{(\ref{47})}{=}
	N(x\cdot_{N} y)\cdot z+(x\cdot_{N} y)\cdot N(z)-N((x\cdot_{N} y)\cdot z)+
	N(y\cdot_{N} x)\cdot z+(y\cdot_{N} x)\cdot N(z)\\
	-N((y\cdot_{N} x)\cdot z)
	+N(x)\cdot(y\cdot_{N} z)+x\cdot N(y\cdot_{N} z)-N(x\cdot(y\cdot_{N} z))+
	+N(y)\cdot(x\cdot_{N} z)\\
	+y\cdot N(x\cdot_{N} z)-N(y\cdot(x\cdot_{N} z))\\
	\overset{(\ref{57}),(\ref{47})}{=}	\bigg(N(x)\cdot N(y)\bigg)\cdot z+\bigg(N(x)\cdot y+x\cdot N(y)-N(x\cdot y)\bigg)\cdot N(z)\\
	-N\bigg(\bigg(N(x)\cdot y+x\cdot N(y)-N(x\cdot y)\bigg)\cdot z\bigg)
	+\bigg(N(y)\cdot N(x)\bigg)\cdot z\\
	+\bigg(N(y)\cdot x+y\cdot N(x)-N(y\cdot x)\bigg)\cdot N(z)
	-N\bigg(\bigg(N(y)\cdot x+y\cdot N(x)-N(y\cdot x)\bigg)\cdot z\bigg)\\
	+N(x)\cdot \bigg(N(y)\cdot z+y\cdot N(z)-N(y\cdot z)\bigg)+x\cdot \bigg(N(y)\cdot N(z)\bigg)\\
	-N\bigg(x\cdot \bigg(N(y)\cdot z+y\cdot N(z)-N(y\cdot z)\bigg)\bigg)  	
	+N(y)\cdot \bigg(N(x)\cdot z+x\cdot N(z)-N(x\cdot z)\bigg)\\
	+y\cdot \bigg(N(x)\cdot N(z)\bigg)
	-N\bigg(y\cdot \bigg(N(x)\cdot z+x\cdot N(z)-N(x\cdot z)\bigg)\bigg)\\
	=\bigg(N(x)\cdot N(y)\bigg)\cdot z+N(x)\cdot\bigg( N(y)\cdot z\bigg)+\bigg(N(y)\cdot N(x)\bigg)\cdot z+N(y)\cdot\bigg( N(x)\cdot z\bigg)\\
	\bigg(N(x)\cdot y\bigg)\cdot N(z)+N(x)\cdot\bigg( y\cdot N(z)\bigg)+\bigg(y\cdot N(x)\bigg)\cdot N(z)+y\cdot\bigg( N(x)\cdot N(z)\bigg)\\
	\bigg(x\cdot N(y)\bigg)\cdot N(z)+x\cdot\bigg(  N(y)\cdot N(z)\bigg)+\bigg( N(y)\cdot x\bigg)\cdot N(z)+ N(y)\cdot\bigg( x\cdot N(z)\bigg)\\
	-N(x\cdot y)\cdot N(z)-N(y\cdot x)\cdot N(z)-N(x)\cdot N(y\cdot z)-N(y)\cdot N(x\cdot z)\\
	-N\bigg(\bigg(N(x)\cdot y+x\cdot N(y)-N(x\cdot y)\bigg)\cdot z\bigg)-N\bigg(\bigg(N(y)\cdot x+y\cdot N(x)-N(y\cdot x)\bigg)\cdot z\bigg)\\
	-N\bigg(x\cdot\bigg(N(y)\cdot z+y\cdot N(z)-N(y\cdot z)\bigg)\bigg)  
	-N\bigg(y\cdot\bigg(N(x)\cdot z+x\cdot N(z)-N(x\cdot z)\bigg)\bigg)\\
	\mbox{\,\,\,\,\,\,\,\,\,\,\,\,\,\,\,\,\,\,\,\,\,\,\,\,\,\,\,\,\,\,\,\,\,\,\,\,\,\,\,\,\,\ (rearranging terms)}\\
\end{array} $\\
$ \begin{array}{lll}
	\overset{(\ref{47})}{=}Aasso_{N}\bigg(N(x),N(y),z\bigg)+Aasso_{N}\bigg(N(y),N(x),z\bigg)\\
	Aasso_{N}\bigg(N(x),y,N(z)\bigg)+Aasso_{N}\bigg(y,N(x),N(z)\bigg)\\
	Aasso_{N}\bigg(x,N(y),N(z)\bigg)+Aasso_{N}\bigg(N(y),x,N(z)\bigg)\\
	-N\bigg(\bigg(N(x\cdot y)\cdot z+(x\cdot y)\cdot N(z)-N((x\cdot y)\cdot z)\bigg)\bigg)-N\bigg(\bigg(N(y\cdot x)\cdot z+(y\cdot x)\cdot N(z)\\
	-N((y\cdot x)\cdot z)\bigg)\bigg)
	-N\bigg(N(x)\cdot(y\cdot z)+x\cdot N(y\cdot z)-N(x\cdot(y\cdot z))\bigg)-N\bigg(N(y)\cdot(x\cdot z)\\
	+y\cdot N(x\cdot z)-N(y\cdot(x\cdot z))\bigg)	
	-N\bigg(\bigg(N(x)\cdot y+x\cdot N(y)-N(x\cdot y)\bigg)\cdot z\bigg)\\
	-N\bigg(\bigg(N(y)\cdot x+y\cdot N(x)-N(y\cdot x)\bigg)\cdot z\bigg)
	-N\bigg(x\cdot\bigg(N(y)\cdot z+y\cdot N(z)-N(y\cdot z)\bigg)\bigg)\\
	-N\bigg(y\cdot\bigg(N(x)\cdot z+x\cdot N(z)-N(x\cdot z)\bigg)\bigg)\\
	\overset{(\ref{6})}{=}-N\bigg(Aass_{N}(x,y,N(z))+Aass_{N}(y,x,N(z))\bigg)-N\bigg(Aass_{N}(N(x),y,z))+\\Aass_{N}(y,N(x),z)\bigg)
	-N\bigg(Aass_{N}(N(y),x,z)+Aass_{N}(x,N(y),z)\bigg)\\-N^{2}\bigg(Aass_{N}(x,y,z)+Aass_{N}(y,x,z)\bigg)
	\overset{(\ref{1})}{=}0.
\end{array} $\\\\
Therefore, $ (A,\cdot_{N}) $ is left pre-Jacobi-Jordan algebra.
\end{proof}
	The following result says that a Nijenhuis operator gives rise to a trivial deformation of a left pre-Jacobi-Jordan.
	\begin{theorem}
		Let $ N $ be a Nijenhuis operator on the left pre-Jacobi-Jordan $ (A,\cdot) $. Then a deformation of $ (A,\cdot) $ can be obtained by putting
		\begin{eqnarray}
			\label{57}
			\omega(x,y)=(\delta^1 N)(x,y):=x\cdot_N y,&\forall x,y\in A.
		\end{eqnarray}
		Furthermore, this deformation is trivial.
	\end{theorem}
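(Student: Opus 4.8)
The plan is to verify the two conditions from Definition~\ref{D11} and the analysis following it, namely that $\omega=\delta^1 N$ is a $2$-cocycle (Eq.~(\ref{24})) and that $(A,\omega)$ is itself a left pre-Jacobi-Jordan algebra (Eq.~(\ref{25})), and then to exhibit $Id_A+tN$ as a trivializing morphism. First I would observe that $\omega=\delta^1 N$ by construction, so $\omega\in\mathcal{B}^2(A,A)\subset\mathcal{Z}^2(A,A)$ provided the coboundary map is defined on $N$; since the cochain $N\in\mathbf{C}^1(A,A)$ and $d^2\circ\delta^1=0$ by Proposition~\ref{Pro7}, we get Eq.~(\ref{24}) for free once we know $\delta^1 N$ makes sense, i.e. once we check $N\in\mathbf{A}^1(A,A)$. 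That membership condition (\ref{cA}) for $n=1$ reads $\circlearrowleft_{(u,v,w)} N((u\ast v)\cdot w)=0$, which is immediate from the identity (\ref{6l}) rewritten cyclically together with linearity of $N$; this is the first small step.

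Next I would establish Eq.~(\ref{25}), i.e. that $x\cdot_N y$ defines a left pre-Jacobi-Jordan structure. This is exactly the content of Proposition~\ref{pJJN}, which states that $A_N=(A,\cdot_N)$ is a left pre-Jacobi-Jordan algebra whenever $N$ is Nijenhuis. So Eq.~(\ref{25}) holds by invoking Proposition~\ref{pJJN}. Together with the cocycle condition, Definition~\ref{D11} then gives that $\omega=\delta^1 N$ generates a linear deformation $\cdot_t = \cdot + t\,\omega$ of $(A,\cdot)$.

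For triviality, I would check that $Id_A+tN\colon A_t\to A$ is a morphism of left pre-Jacobi-Jordan algebras, i.e. $(Id_A+tN)(x\cdot_t y)=\big((Id_A+tN)x\big)\cdot\big((Id_A+tN)y\big)$ for all $x,y$ and all $t$. Expanding the right-hand side gives $x\cdot y+t\big(N(x)\cdot y+x\cdot N(y)\big)+t^2\,N(x)\cdot N(y)$, while the left-hand side is $x\cdot y+t\,\omega(x,y)+tN(x\cdot y)+t^2 N(\omega(x,y))$. Comparing coefficients of $t$ yields $\omega(x,y)=N(x)\cdot y+x\cdot N(y)-N(x\cdot y)=x\cdot_N y$, which is precisely (\ref{57}); comparing coefficients of $t^2$ yields $N(\omega(x,y))=N(x)\cdot N(y)$, which is the Nijenhuis condition (\ref{47}) since $\omega=\cdot_N$. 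Hence both identities hold, $Id_A+tN$ is the required family of morphisms, and the deformation is trivial in the sense of Definition~\ref{D14}.

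I expect the only genuinely delicate point to be the bookkeeping in verifying $N\in\mathbf{A}^1(A,A)$ and keeping the conventions of the zigzag complex straight — in particular making sure that $\delta^1$ is applied to a cochain living in $\mathbf{A}^1(A,A)$ and that the sign conventions in $d$ versus $\delta$ match the cocycle equation (\ref{24}); once that is pinned down, everything else reduces to the already-proved Proposition~\ref{pJJN} and to matching powers of $t$, which is routine. An alternative, fully self-contained route would bypass Proposition~\ref{Pro7} entirely: directly substitute $\omega=\cdot_N$ into (\ref{24}) and (\ref{25}) and expand using the Nijenhuis identity and (\ref{6l}), but this duplicates the long computation already done in the proof of Proposition~\ref{pJJN}, so I would prefer the modular argument above.
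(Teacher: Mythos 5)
Your proof follows essentially the same route as the paper's: the cocycle condition is obtained from $d^2\circ\delta^1=0$, Eq.~(\ref{25}) is obtained by invoking Proposition~\ref{pJJN}, and triviality is obtained by expanding $(Id_A+tN)(x\cdot_t y)$ and matching the coefficients of $t$ and $t^2$ against the Nijenhuis identity (\ref{47}). The only difference is that you explicitly flag the need to verify $N\in\mathbf{A}^1(A,A)$ before applying $\delta^1$ — a point the paper passes over silently — which is a reasonable extra precaution rather than a change of method.
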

	\begin{proof}
		By the definition of $\omega$, it is clear that $d^2\omega=0$ since $d^2\circ \delta^1=0.$\\
		Therefore $ \omega $ is a $ 2- $cocycle of the regular representation of left pre-Jacobi-Jordan $ (A,\cdot)$.
		Next,  $\omega$ satifies Eq.(\ref{25}) since $(A,\cdot_N)$ is a pre-Jacobi-Jordan algebra by Proposition \ref{pJJN}. Finally, by straightforward computations, we have
		\begin{eqnarray}
			&&(Id+tN)(x)\cdot (Id+tN)(y)=x\cdot y+t(N(x)\cdot y+x\cdot N(y))+t^2 N(x)\cdot N(y),\nonumber
		\end{eqnarray}	
		\begin{eqnarray}
			&&(Id+tN)(x\cdot_{t}y)=x\cdot y+t(N(x)\cdot y+\omega(x,y))+t^2N(\omega(x,y))\nonumber\\
			&&	\overset{(\ref{57})}{=} x\cdot y+t(N(x)\cdot y+x\cdot N(y))+t^2N(N(x)\cdot y+x\cdot N(y)-N(x\cdot y))\nonumber\\
			&&
			\overset{(\ref{47})}{=} x\cdot y+t(N(x)\cdot y+x\cdot N(y))+t^2N(x)\cdot N(y).\nonumber
		\end{eqnarray}	
		It follows that
		\begin{eqnarray}
			&&(Id+tN)(x\cdot_{t}y)=(Id+tN)(x)\cdot (Id+tN)(y).\nonumber
		\end{eqnarray}
		Hence, this deformation is trivial.	
	\end{proof}	
Before giving the last result of this paper, let recall the following.
\begin{definition}
	Let $(A, \ast)$ be a Jacobi-Jordan algebra.
	A linear operator $N: A\rightarrow A$ is called a Nijienhuis operator if
	\begin{eqnarray}
		N(u)\ast N(v)=N(N(u)\ast v+u\ast N(v)-N(u\ast v)). \label{opn}
	\end{eqnarray}
\end{definition}
	\begin{proposition}\label{Pro10}
		Let $ N $ be a Nijenhuis operator on a pre-Jacobi-Jordan algebra $ (A,\cdot) $. Then $ N $ is also a Nijenhuis operator on the sub-adjacent Jacobi-Jordan $ A^{C} $.
	\end{proposition}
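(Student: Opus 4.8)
The plan is to expand the defining identity for a Nijenhuis operator on the Jacobi-Jordan algebra $A^{C}=(A,\ast)$ in terms of the product $\cdot$ and reduce everything to the pre-Jacobi-Jordan Nijenhuis condition (\ref{47}). Recall that $x\ast y = x\cdot y + y\cdot x$ and that (\ref{47}) reads $N(x)\cdot N(y)=N(x\cdot_N y)$ with $x\cdot_N y = N(x)\cdot y + x\cdot N(y) - N(x\cdot y)$.

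First I would compute the left-hand side $N(x)\ast N(y) = N(x)\cdot N(y) + N(y)\cdot N(x)$ and apply (\ref{47}) to each of the two summands, obtaining $N(x)\ast N(y) = N(x\cdot_N y) + N(y\cdot_N x) = N\big(x\cdot_N y + y\cdot_N x\big)$ by linearity of $N$. Next I would simplify the argument:
\[
x\cdot_N y + y\cdot_N x = \big(N(x)\cdot y + y\cdot N(x)\big) + \big(x\cdot N(y) + N(y)\cdot x\big) - \big(N(x\cdot y) + N(y\cdot x)\big),
\]
and recognizing the bracketed sums as $N(x)\ast y$, $x\ast N(y)$, and $N(x\ast y)$ respectively, this becomes $N(x)\ast y + x\ast N(y) - N(x\ast y)$. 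Substituting back gives exactly
\[
N(x)\ast N(y) = N\big(N(x)\ast y + x\ast N(y) - N(x\ast y)\big),
\]
which is the Nijenhuis condition (\ref{opn}) for $N$ on the Jacobi-Jordan algebra $A^{C}$.

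This argument is essentially a bookkeeping exercise in regrouping terms, so there is no serious obstacle; the only point requiring a moment of care is to make sure that the two applications of (\ref{47}) — one to $N(x)\cdot N(y)$ and one to $N(y)\cdot N(x)$ — are combined correctly before symmetrizing, and that $N$'s linearity is used to pull the sum out. I would also note at the start that $A^{C}$ is indeed a Jacobi-Jordan algebra by Proposition \ref{Pro2}, so that the statement makes sense.
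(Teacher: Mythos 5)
Your proposal is correct and follows exactly the paper's own argument: expand $N(x)\ast N(y)=N(x)\cdot N(y)+N(y)\cdot N(x)$, apply (\ref{47}) to each summand, and regroup by linearity of $N$ to obtain (\ref{opn}). No gaps.
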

	\begin{proof}
		For all $ x,y\in A $, we have : \\
		$ \begin{array}{lll}
			N(x)\ast N(y)=N(x)\cdot N(y)+N(y)\cdot N(x)\\
			\overset{(\ref{47})}{=}N\bigg(N(x)\cdot y+x\cdot N(y)-N(x\cdot y)\bigg)+N\bigg(N(y)\cdot x+y\cdot N(x)-N(y\cdot x)\bigg)\\
			=N\bigg(N(x)\ast y+x\ast N(y)-N(x\ast y)\bigg).
		\end{array}
		$\\
		Therefore, $ N $ is a Nijenhuis operator on the sub-adjacent Jacobi-Jordan $ A^{C} $. 
	\end{proof}
	\begin{proposition}\label{Pro11}
		Let $ (A,\cdot) $ be a pre-Jacobi-Jordan algebra and $ N : A\to A $ be a linear map.
		\begin{enumerate}
			\item[(i)]If $ N $ is a Nijenhuis operator on $A$ then, for all $ \lambda\in \mathbb{K} $, $ N+\lambda Id_{A} $ is a Nijenhuis operator on $ A $.
			\item[(ii)]	If $ N^{2}=0 $, then $ N $ is a Nijenhuis operator  on $A$if and only if $ N $ is Rota-Baxter operator of weight zero on $ A $.
			\item[(iii)] If $ N^{2}=N $, then $ N $ is a Nijenhuis operator on $A$ if and only if $ N $ is Rota-Baxter operator of weight $ -1 $ on $ A $.  
		\end{enumerate}
	\end{proposition}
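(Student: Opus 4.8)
The plan is to verify each of the three assertions by substituting directly into the defining identity~(\ref{47}) of a Nijenhuis operator and comparing it with the Rota-Baxter identity~(\ref{11}); all three parts are elementary bilinear manipulations, with no structural input beyond the hypothesis on $N$ in each case.

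For (i), I would set $M:=N+\lambda\,\mathrm{Id}_A$ and first compute the auxiliary product attached to $M$. Using bilinearity of the multiplication,
\begin{eqnarray}
x\cdot_M y &=& M(x)\cdot y+x\cdot M(y)-M(x\cdot y)\nonumber\\
&=& \big(N(x)\cdot y+x\cdot N(y)-N(x\cdot y)\big)+\lambda\,(x\cdot y)\;=\;x\cdot_N y+\lambda\,(x\cdot y).\nonumber
\end{eqnarray}
Then I would expand both sides of the desired identity $M(x)\cdot M(y)=M(x\cdot_M y)$: the left-hand side equals $N(x)\cdot N(y)+\lambda\big(N(x)\cdot y+x\cdot N(y)\big)+\lambda^{2}(x\cdot y)$, while the right-hand side equals $N(x\cdot_N y)+\lambda\,N(x\cdot y)+\lambda\,(x\cdot_N y)+\lambda^{2}(x\cdot y)$. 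Replacing $N(x)\cdot N(y)$ by $N(x\cdot_N y)$ (the hypothesis that $N$ is Nijenhuis) and unfolding $x\cdot_N y$ shows that the two sides coincide, so $M$ is again a Nijenhuis operator.

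For (ii), assuming $N^{2}=0$ gives $N(x\cdot_N y)=N\big(N(x)\cdot y\big)+N\big(x\cdot N(y)\big)-N^{2}(x\cdot y)=N\big(N(x)\cdot y+x\cdot N(y)\big)$, so (\ref{47}) becomes $N(x)\cdot N(y)=N\big(N(x)\cdot y+x\cdot N(y)\big)$, which is exactly (\ref{11}) with $P=N$ and $\lambda=0$; both implications are then immediate. For (iii), with $N^{2}=N$ one has $N(x\cdot_N y)=N\big(N(x)\cdot y\big)+N\big(x\cdot N(y)\big)-N(x\cdot y)$, so (\ref{47}) reads $N(x)\cdot N(y)=N\big(N(x)\cdot y+x\cdot N(y)-N(x\cdot y)\big)$, which is precisely (\ref{11}) with $P=N$ and $\lambda=-1$ (cf. the Remark following Definition~\ref{D13}); again the equivalence follows at once.

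None of these steps poses a genuine obstacle; the only point that needs attention is the bookkeeping of the $\lambda$- and $\lambda^{2}$-terms in part (i), where one must keep the auxiliary products $x\cdot_M y$ and $x\cdot_N y$ carefully distinguished. Parts (ii) and (iii) are then one-line specializations of the same computation.
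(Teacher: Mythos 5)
Your proof is correct and follows essentially the same route as the paper: part (i) is the same direct expansion showing that the Nijenhuis defect of $N+\lambda\,\mathrm{Id}_A$ reduces to that of $N$, and parts (ii)--(iii) rest on the same observation the paper makes, namely rewriting the Nijenhuis identity as $N(x)\cdot N(y)=N\big(N(x)\cdot y+x\cdot N(y)\big)-N^{2}(x\cdot y)$ and specializing $N^{2}$. Your bookkeeping via $x\cdot_M y=x\cdot_N y+\lambda\,(x\cdot y)$ is a slightly cleaner organization of the same computation.
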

	\begin{proof}
		First, we prove (i) as follows. Suppose that $N$ is a Nijenhuis operator on $A$ and  let $\lambda\in \mathbb{K}$. Then, for all $ x,y\in A $, we compute \\
		$ \begin{array}{lll}
			(N+\lambda Id_{A})(x)\cdot(N+\lambda Id_{A})(y)-(N+Id_{A})\bigg((N+\lambda Id_{A})(x)\cdot y+x\cdot(N+\lambda Id_{A})(y)\\
			-(N+\lambda Id_{A})(x\cdot y)\bigg)=N(x)\cdot N(y)-N\bigg(N(x)\cdot y+x\cdot N(y)-N(x\cdot y)\bigg)+\lambda x\cdot N(y)\\
			+\lambda N(x)\cdot y+\lambda^{2}x\cdot y-\lambda\bigg(N(x)\cdot y+x\cdot N(y)-N(x\cdot y)+\lambda x\cdot y\bigg)-\lambda N(x\cdot y)\\
			=N(x)\cdot N(y)-N\bigg(N(x)\cdot y+x\cdot N(y)-N(x\cdot y)\bigg)\\
			=0 \text{  (since N is a Nijenhuis operator)}.	
		\end{array} $\\\\
		Hence, (i) holds.
		If observing that $ N(x)\cdot N(y)=N\bigg(N(x)\cdot y+x\cdot N(y\bigg)-N^{2}(x\cdot y) $  then  (ii) and (iii) follow.	
	\end{proof}	
	
\end{document}